\newcommand{\R}{\mathbb R}
\newcommand{\s}{{\mathcal S}}
\newcommand{\Z}{\mathbb Z}
\newcommand{\pd}{\partial}
\newcommand{\tres}{|\!|\!|}
\newcommand{\secao}[1]{\section{#1}\setcounter{equation}{0}}
\newtheorem{theorem}{Theorem}[section]
\newtheorem{remark}{Remark}[section]
\newtheorem{lemma}[theorem]{Lemma}
\begin{document}


\title[On uniqueness and decay]{On uniqueness and decay of solution for Hirota equation}

\author{X. Carvajal}
\address{Instituto de Matem\'atica, UFRJ, Rio de Janeiro,   Brazil}
\email{carvajal@im.ufrj.br}
\thanks{X. C. was partially supported by CNPq, Brazil.}

\author{M. Panthee}
\address{Centro de Matem\'atica, Universidade do Minho, 4710-057, Braga, Portugal. }
\email{mpanthee@math.uminho.pt}

\thanks{M. P.  was partially supported by the Research Center of 
Mathematics of the University of Minho, Portugal through the FCT Pluriannual Funding Program.}

\subjclass[2000]{35Q58, 35Q60}
\keywords{Schr\"{o}dinger equation, Korteweg-de Vries equation,
\hfill\break\indent smooth solution,  unique continuation property, compact support}

\begin{abstract}
We address the question of the uniqueness of solution to the initial value problem associated to the equation
\begin{equation*}
\partial_{t}u+i\alpha
\partial^{2}_{x}u+\beta  \partial^{3}_{x}u+i\gamma|u|^{2}u+\delta
|u|^{2}\partial_{x}u+\epsilon u^{2}\partial_{x}\overline{u}  =  0,
 \quad x,t \in \R,
\end{equation*}
and prove that a certain decay property of  the difference $u_1-u_2$ of two solutions $u_1$ and $u_2$ at two different instants of times $t=0$ and $t=1$, is sufficient to ensure that $u_1=u_2$ for all the time.
\end{abstract}
\maketitle


\section{Introduction}

In this  work we consider the  following equation
\begin{equation}\label{0y0}
\partial_{t}u+i\alpha
\partial^{2}_{x}u+\beta  \partial^{3}_{x}u+i\gamma|u|^{2}u+\delta
|u|^{2}\partial_{x}u+\epsilon u^{2}\partial_{x}\overline{u}  =  0,
 \quad x,t \in \R,
\end{equation}
where $\alpha,\beta\in \R $, $\beta \ne 0$, $\gamma, \delta, \epsilon
  \in \mathbb{C}$ and  $u = u(x, t)$ is a complex valued function. Our main concern is to find a decay property satisfied by the difference of two different solutions at two different instants of time that is sufficient to prove the uniqueness of the solution to the initial value problem (IVP) associated to \eqref{0y0}.

The equation \eqref{0y0}, with the mixed structure of the Korteweg-de
 Vries (KdV) and the Schr\"{o}dinger equations, was proposed by Hasegawa and Kodama in \cite{[H-K], [Ko]}
 to describe the nonlinear propagation of pulses in optical fibers. This equation is also known as Hirota equation in the literature.  Several aspects of this equation including well-posedness issues, solitary wave solutions, unique continuation property,  have been studied by various authors recently, see for example \cite{CL1}, \cite{CP1},  \cite{CP2}, \cite{[C2]}, \cite{[G]}
 and references therein.

 Study of the unique continuation property (UCP) for certain models has drawn much attention of a considerable section of mathematicians in recent time, see for example \cite{[B2]},
  \cite{CP1}, \cite{IR-1} --  \cite{KRS.1}, \cite{SM} -- \cite{[S-Sch]}, \cite{DT}, \cite{[Z]} and references therein.
  In particular,  in \cite{CP1} and \cite{CP2} we addressed  the UCP for the equation (\ref{0y0}).
   In \cite{CP1}, we proved that if a sufficiently smooth solution $u$ to the initial value problem associated to (\ref{0y0})
    is supported in a half line at two different instants of time then $ u$ vanishes identically.
    The precise statement of our result in \cite{CP1} is the following.
    
\begin{theorem}\cite{CP1}.\label{teo12}
Let $u \in {C}([t_{1},t_{2}];
H^{s})\cap {C}^{1}([t_{1},t_{2}]; H^{1})$, $s
\geq 4$ be a strong solution of the  equation (\ref{0y0})  with $\alpha, \beta,
 \gamma , \delta , \epsilon \in \R$, $\beta \ne 0$. If there exists
 $t_{1} < t_{2} $ such that
\begin{eqnarray} {\rm{supp}}\, u(\cdot,t_{j}) & \subset & (-\infty, a),
 \qquad j=1,2 \\
or, \, (\, {\rm{supp}} \,
u(\cdot,t_{j}) & \subset & (b,\infty), \qquad j=1,2 \, ).
 \end{eqnarray}
Then $ u(t)= 0$ for all $t\in [t_1, t_2].$
\end{theorem}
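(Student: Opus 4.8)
The proof follows the circle of ideas of Bourgain and of Kenig--Ponce--Vega for the generalized Korteweg--de Vries equation, adapted to the mixed KdV--Schr\"odinger dispersion of \eqref{0y0}. The plan is as follows.

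\emph{Step 1: reductions and linearization.} Translating in $t$ we may take $t_1=0$, $t_2=1$; translating in $x$ we may take the common half-line to be $(-\infty,0)$; and since the reflection $x\mapsto -x$ interchanges the two cases in the statement while reversing the sign of $\beta$, it suffices to treat, say, $\beta>0$ together with ${\rm supp}\,u(\cdot,0),\ {\rm supp}\,u(\cdot,1)\subset(-\infty,0)$. Because $u\in C([0,1];H^s)\cap C^1([0,1];H^1)$ with $s\ge 4$, Sobolev embedding gives $u,\partial_x u\in L^\infty(\R\times[0,1])$ together with bounds on lower derivatives, so \eqref{0y0} may be read as a linear equation with bounded smooth coefficients,
\[
\partial_t u+i\alpha\,\partial_x^2 u+\beta\,\partial_x^3 u=a_1 u+a_2\,\partial_x u+a_3\,\overline{\partial_x u},\qquad a_1=-i\gamma|u|^2,\quad a_2=-\delta|u|^2,\quad a_3=-\epsilon u^2,
\]
holding classically on $\R\times[0,1]$.

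\emph{Step 2: propagation of super-exponential decay.} One shows that $\sup_{0\le t\le 1}\int_\R e^{2\lambda x}|u(x,t)|^2\,dx<\infty$ for every $\lambda>0$. Indeed, since $u(\cdot,0)$ vanishes on $(0,\infty)$ one has $\int e^{2\lambda x}|u(x,0)|^2\le\|u(\cdot,0)\|_{L^2}^2$; setting $v=e^{\lambda x}u$ and differentiating $\|v(\cdot,t)\|_{L^2}^2$ via the conjugated equation, the third-order term contributes $e^{\lambda x}\beta\partial_x^3 e^{-\lambda x}=\beta(\partial_x-\lambda)^3$, whose only dangerous piece, $-3\beta\lambda\partial_x^2$, produces after one integration by parts the non-positive term $-6\beta\lambda\|\partial_x v\|_{L^2}^2$ (this is where $\beta>0$ is used), which dominates all the first-order contributions — those from $i\alpha(\partial_x-\lambda)^2$, from $3\beta\lambda^2\partial_x$, and from $a_2\partial_x u,\ a_3\overline{\partial_x u}$ — by Cauchy--Schwarz; one is left with $\frac{d}{dt}\|v\|_{L^2}^2\le C(1+\lambda^3)\|v\|_{L^2}^2$, and Gr\"onwall finishes. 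Hence $u$ decays faster than any exponential as $x\to+\infty$, uniformly on $[0,1]$, so $e^{\lambda x}u$ is a legitimate object on the whole slab for all $\lambda$.

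\emph{Step 3: a Carleman estimate and conclusion.} A weight merely affine in $x$ cannot work: after conjugation the principal symbol of $L=\partial_t+i\alpha\partial_x^2+\beta\partial_x^3$ acquires real zeros and the commutator of the self-adjoint and skew-adjoint parts of $e^{\lambda\varphi}Le^{-\lambda\varphi}$ vanishes identically. The remedy is a weight $e^{\lambda\varphi}$ with $\varphi=\varphi(x,t)$ convex in $x$ (so that this commutator becomes coercive), affine as $x\to+\infty$ so that $e^{\lambda\varphi}u$ stays summable on the slab by Step 2, with $\varphi(x,0),\varphi(x,1)\le 0$ on $(-\infty,0)$ so that the boundary terms remain bounded uniformly in $\lambda$ thanks to the support hypothesis, and with a time-dependent part large enough that $\{\varphi>0\}$ contains all of $\R$ at some interior time $t_0\in(0,1)$. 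One obtains, for $\lambda$ large,
\[
\lambda^{3}\|e^{\lambda\varphi}w\|_{L^2(\R\times[0,1])}+\lambda\,\|e^{\lambda\varphi}\partial_x w\|_{L^2(\R\times[0,1])}\le C\|e^{\lambda\varphi}Lw\|_{L^2(\R\times[0,1])}+C\sum_{j=0,1}\|e^{\lambda\varphi}w(\cdot,j)\|_{L^2}.
\]
Applying it with $w=u$ and $Lu=a_1u+a_2\partial_x u+a_3\overline{\partial_x u}$, the terms $\|e^{\lambda\varphi}u\|$ and $\|e^{\lambda\varphi}\partial_x u\|$ coming from $Lu$ are absorbed into the left side once $\lambda$ is large — this is exactly why the estimate must carry the powers $\lambda^3$ and $\lambda$, and where the first-order nonlinear terms $\delta|u|^2\partial_x u$ and $\epsilon u^2\overline{\partial_x u}$ make the argument delicate — leaving $\lambda^3\|e^{\lambda\varphi}u\|_{L^2(\R\times[0,1])}\le C(\|u(\cdot,0)\|_{L^2}+\|u(\cdot,1)\|_{L^2})$ uniformly in $\lambda$. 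Dividing by $\lambda^3$ and letting $\lambda\to\infty$ forces $u=0$ on $\{\varphi>0\}$, in particular $u(\cdot,t_0)\equiv 0$; forward and backward well-posedness of the IVP for \eqref{0y0} in the class under consideration then gives $u\equiv 0$ on $[0,1]$. The main obstacle is Step 3: constructing a weight $\varphi$ that is simultaneously convex enough to make the Carleman estimate hold with the needed powers of $\lambda$ for the third-order, mixed-dispersion operator $L$, flat enough at infinity and on the two supports to keep $e^{\lambda\varphi}u$ summable and the boundary terms $\lambda$-uniformly bounded, and curved enough in $t$ to reach all of space at an interior time.
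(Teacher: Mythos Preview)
The paper does not actually prove this theorem: it is quoted verbatim from the authors' earlier work \cite{CP1}, and the only information given here about the method is the one-line remark that ``we generalized and employed the techniques developed in the context of the generalized KdV equation by Kenig--Ponce--Vega in \cite{[KPV3]} and \cite{KPV-UC2} to prove Theorems \ref{teo12} and \ref{ucp2}.'' So there is no proof in the present paper to compare against, only a pointer to the strategy.

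That said, your sketch is precisely along the lines indicated. The reductions in Step~1 are standard and correct. The weighted energy estimate in Step~2 is the key a~priori bound from \cite{CP1}; you have identified the crucial mechanism (the conjugated term $-3\beta\lambda\partial_x^2$ producing $-6\beta\lambda\|\partial_x v\|_{L^2}^2$ with the favourable sign when $\beta>0$), and this is exactly the point in \cite{CP1} where $\beta\ne 0$ is essential---the paper itself stresses this in Remark~\ref{obspat}. Step~3 is the heart of the matter and also where your sketch is most impressionistic: the existence of a weight $\varphi$ that is simultaneously (i) convex enough in $x$ for the commutator $[S,A]$ to be coercive with the right powers of $\lambda$ for a \emph{third-order} operator, (ii) at most linear as $x\to+\infty$ so that $e^{\lambda\varphi}u$ is controlled by Step~2, (iii) bounded above on the supports at $t=0,1$, and (iv) large at an interior time, is the genuine technical content of \cite{CP1}. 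You have correctly flagged this as the obstacle rather than claimed to have dispatched it; in the actual proof the Carleman inequality is obtained not via a single $L^2$ estimate of the shape you wrote but through smoothing-type mixed-norm estimates for the conjugated operator (the kind appearing in Section~\ref{prel-1} of the present paper), which is how the first-order nonlinear terms $\delta|u|^2\partial_x u$ and $\epsilon u^2\partial_x\bar u$ get absorbed.

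In short: your outline matches the method the paper attributes to \cite{CP1}, and the first two steps are essentially complete; Step~3 is a correct plan but the Carleman estimate you state would need to be proved (and in \cite{CP1} takes a somewhat different, $L^p$-smoothing form).
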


In our subsequent work \cite{CP2}, we obtained more general uniqueness property for solution of the IVP associated to \eqref{0y0}.

\begin{theorem}\cite{CP2}.\label{ucp2} Let $u, v \in {C}([t_{1},t_{2}];
H^{s})\cap {C}^{1}([t_{1},t_{2}]; H^{1})$, $s \geq 4$ be strong solutions of
the equation (\ref{0y0}) with $\alpha, \beta, \gamma , \delta ,
\epsilon \in \R$, $\beta \ne 0$. If there exists $b \in \R$ such that
\begin{eqnarray} u(x,t) & = & v(x, t), \qquad (x, t)
 \in (b, \infty)\times \{t_1, t_2\}, \label{supp1} \\ 
or, \, ( u(x,t) & = & v(x, t), \qquad (x, t)
\in (-\infty, b)\times \{t_1, t_2\}).\label{supp0}  \end{eqnarray} 
Then \[ u(t)= v(t) \quad \forall \,t\in [t_1, t_2] .\]  
\end{theorem}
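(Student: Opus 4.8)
The plan is to reduce the assertion to a unique continuation property for a \emph{linear} third‑order equation satisfied by the difference $w:=u-v$, and then to run the Carleman‑estimate machinery that underlies Theorem~\ref{teo12}. First I would subtract the two copies of \eqref{0y0} and rewrite the differences of the nonlinear terms linearly in $w,\bar w,\partial_x w,\partial_x\bar w$ by means of the elementary identities
\[
|u|^{2}u-|v|^{2}v=|u|^{2}w+\bar u\, v\,w+v^{2}\bar w,\qquad
u^{2}\partial_x\bar u-v^{2}\partial_x\bar v=u^{2}\partial_x\bar w+(u+v)\,\partial_x\bar v\,w ,
\]
\[
|u|^{2}\partial_x u-|v|^{2}\partial_x v=|u|^{2}\partial_x w+u\,\partial_x v\,\bar w+\bar v\,\partial_x v\,w ,
\]
so that $w$ solves
\[
\partial_t w+i\alpha\,\partial^{2}_{x}w+\beta\,\partial^{3}_{x}w+a_1\,\partial_x w+a_2\,\partial_x\bar w+a_3\,w+a_4\,\bar w=0 ,
\]
where the $a_j$ are fixed polynomials in $u,\bar u,v,\bar v,\partial_x u,\partial_x v$. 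Since $s\ge4$, Sobolev embedding makes $u,v,\partial_x u,\partial_x v$ bounded on $[t_1,t_2]\times\R$, hence $a_j\in L^{\infty}([t_1,t_2]\times\R)$. Hypothesis \eqref{supp1} is precisely $\operatorname{supp}w(\cdot,t_j)\subset(-\infty,b]$ for $j=1,2$, and \eqref{supp0} reduces to this by the reflection $x\mapsto-x$ (which only turns $\beta$ into $-\beta\ne0$). After translating in $x$ and rescaling $t$ I may assume $b=0$ and $[t_1,t_2]=[0,1]$; if convenient, the unimodular gauge change $w=e^{-i\alpha x/(3\beta)}\tilde w$ (note $3\beta\cdot(-i\alpha/(3\beta))+i\alpha=0$) removes the $i\alpha\,\partial_x^2$ term without spoiling either the boundedness of the coefficients or the support condition.

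The core is then a Carleman inequality for the Airy‑type operator $\partial_t+\beta\,\partial_x^3$: for a weight $\varphi=\varphi(x,t)$ adapted to the dispersion (convex in $x$ and carrying a time factor singular at $t=0,1$, e.g.\ of the form $x\,\psi(t)$ with $\psi$ blowing up at the endpoints) there are $c>0$ and $\lambda_0$ such that, for $\lambda\ge\lambda_0$ and $g$ smooth with enough decay,
\[
\lambda^{3/2}\big\|e^{\lambda\varphi}g\big\|_{L^{2}([0,1]\times\R)}+\lambda^{1/2}\big\|e^{\lambda\varphi}\partial_x g\big\|_{L^{2}([0,1]\times\R)}\le c\,\big\|e^{\lambda\varphi}(\partial_t+\beta\,\partial_x^3)g\big\|_{L^{2}([0,1]\times\R)} .
\]
This is the estimate on which Theorem~\ref{teo12} rests (going back to the Carleman estimates of Kenig--Ponce--Vega for KdV‑type equations), and $\beta\ne0$ is what generates the gain $\lambda^{3/2}$. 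Before applying it to $g=w$ I must check $e^{\lambda\varphi}w,\ e^{\lambda\varphi}\partial_x w\in L^{2}([0,1]\times\R)$ for every $\lambda$: since $\varphi\le0$ where $x\le0$, the only issue is $x\to+\infty$ at intermediate times, and there the half‑line vanishing of $w(\cdot,0)$ and $w(\cdot,1)$ propagates, through weighted energy estimates for the linear equation, to decay of $w(\cdot,t)$ faster than any exponential in $x_+$ (an ``upper bound'' lemma). Granting this, I feed the equation into the estimate, replace $(\partial_t+\beta\,\partial_x^3)w$ by $-(a_1\partial_x w+a_2\partial_x\bar w+a_3 w+a_4\bar w)$, and use $|\partial_x\bar w|=|\partial_x w|$ and the boundedness of the $a_j$ to obtain
\[
\lambda^{3/2}\big\|e^{\lambda\varphi}w\big\|_{L^{2}}+\lambda^{1/2}\big\|e^{\lambda\varphi}\partial_x w\big\|_{L^{2}}\le c'\Big(\big\|e^{\lambda\varphi}w\big\|_{L^{2}}+\big\|e^{\lambda\varphi}\partial_x w\big\|_{L^{2}}\Big).
\]
For $\lambda$ large the right side is absorbed by the left, so $e^{\lambda\varphi}w\equiv0$; hence $w\equiv0$ on $[0,1]\times\R$, i.e.\ $u\equiv v$ on $[t_1,t_2]$.

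The two reductions aside, the genuinely hard part is the Carleman estimate for the \emph{third}‑order operator. Unlike the elliptic/parabolic/Schr\"odinger situation, a bare exponential weight $e^{\lambda x}$ is useless here: the real part of the conjugated symbol $i\tau+\beta(i\xi-\lambda)^3$ equals $\beta\lambda(3\xi^{2}-\lambda^{2})$, which changes sign, so no positive commutator is available and one must design a genuinely curved, time‑dependent weight and then control carefully all the resulting lower‑order errors (those coming from the $i\alpha\,\partial_x^2$ term and from any space‑time cutoff one introduces). The accompanying propagation‑of‑decay (``upper bound'') step that places $w$ in the weighted $L^2$ space is the second, more technical, point; everything else is bookkeeping.
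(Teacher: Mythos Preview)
This theorem is not proved in the present paper; it is quoted from \cite{CP2} as background, so there is no in-paper proof to compare against directly. The paper does, however, say (just after Theorem~\ref{ucp2}) that Theorems~\ref{teo12} and~\ref{ucp2} were obtained by adapting the Kenig--Ponce--Vega machinery from \cite{[KPV3]} and \cite{KPV-UC2}, and Section~\ref{prel-1} of the present paper reproduces the analytic core of that argument, so one can see what the actual method looks like.

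Your overall architecture---write the equation for $w=u-v$ as a linear third-order equation with bounded variable coefficients, show that half-line vanishing at $t_1,t_2$ forces rapid decay of $w$ at all intermediate times (the ``upper bound'' step), and then use a weighted inequality for the linear operator to absorb the lower-order terms for large parameter---matches the strategy. The divergence is in how the weighted inequality is produced. You claim that the plain weight $e^{\lambda x}$ is useless because the real part of the conjugated symbol $\beta\lambda(3\xi^2-\lambda^2)$ changes sign, and you therefore propose a curved, time-dependent weight. The actual argument in \cite{CP2} (and visible here in Lemmas~\ref{lema-2} and~\ref{lema-1}) \emph{does} use $e^{mx}$: the sign change is not an obstruction because one never seeks $L^2\to L^2$ positivity of a commutator. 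Instead the conjugated operator $H_m$ is inverted explicitly on the Fourier side (the operator $T$ in \eqref{eq2.13}), and $T$ is shown to be bounded in mixed Lebesgue norms $L^8_{xt}$, $L^{16}_xL^{16/5}_t$, $L^\infty_xL^2_t$ via the one-variable multiplier formulas \eqref{eq2.18}--\eqref{eq2.21}. These are weighted dispersive/smoothing estimates, closer in spirit to Strichartz than to a classical $L^2$ Carleman inequality; the variable coefficients $a_j$ are then absorbed by H\"older in these mixed norms (cf.\ Lemma~\ref{lema-3}), not by a large-$\lambda$ gain in $L^2$.

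An $L^2$ Carleman inequality with a convex weight does appear in this paper---Section~\ref{prel}, with weight $e^{a(x/R+\varphi(t))^2}$---but it serves the opposite purpose (a lower bound, Theorem~\ref{teomx1}) and is paired with the dispersive upper bound to prove the sharper Theorem~\ref{ucp3}. For Theorem~\ref{ucp2} itself, the route taken in \cite{CP2} is the mixed-norm dispersive one with the bare exponential weight.
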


\begin{remark}Theorem \ref{teo12} is the special case of Theorem \ref{ucp2} when $v\equiv 0$.
\end{remark}

Motivation to obtain the above results is the following observation. Consider the IVP associated to the linear part of (\ref{0y0}), i.e.,
\begin{equation}\label{lnpt1}
\begin{cases}
u_t + i\alpha u_{xx} + \beta u_{xxx} = 0,\\
u(x, 0) = u_0(x).
\end{cases}
\end{equation}
If $u$ and $v$ are solutions to (\ref{lnpt1}) then $w:= u-v$ is also a
solution to (\ref{lnpt1}) with initial data $w(x,0)= u(x,0)-v(x,0):=
w_0(x)$. If $w_0$ is sufficiently smooth and has compact support, then
using the Paley-Wiener theorem it is easy to see (for detail see
\cite{CP1}) that $w \equiv 0$, i.e., $u \equiv v$. But the proof of the same property is not so simple when one considers the nonlinear terms as well, because in this case $w:= u-v$ is no more a solution. To overcome this situation, we generalized and employed the techniques developed in the context of the generalized KdV equation by Kenig-Ponce-Vega in \cite{[KPV3]} and  \cite{KPV-UC2} to prove Theorems \ref{teo12} and \ref{ucp2} .

 Quite recently, Escauriaza, Kenig, Ponce and Vega in \cite{E-KPV1} introduced a new
technique to obtain sufficient conditions on the behavior of the difference $u_1-u_2$ of two solutions $u_1$ and $u_2$ of the generalized KdV equation at two different instants of time $t=0$ and $t=1$ that guarantees $u_1 \equiv u_2$.  In \cite{E-KPV1}, the authors obtained a sharp decay condition to guarantee the uniqueness of solution to the generalized KdV equation. So, there arise a natural question, whether one can find such a decay condition to get uniqueness property for a mixed equation of the KdV and Schr\"odinger type.
In this work, we shall extend the approach in \cite{E-KPV1} to address this question to the IVP  associated to the Hirota equation (\ref{0y0}) which has a mixed structure of the KdV and the Schr\"odinger  equations.    Our first main result of this work is the following.

\begin{theorem}\label{ucp3} Let $u_1, u_2\in C([0,1];
H^{3}(\R))\cap L^2(|x|^2dx))$,  be strong solutions of
the equation (\ref{0y0}) with $\alpha, \beta, \gamma , \delta ,
\epsilon \in \R$, $\beta \ne 0$. If, for any $a>0$,
\begin{equation} u_1(\cdot, 0)-u_2(\cdot, 0), \quad u_1(\cdot, 1)-u_2(\cdot, 1) \in H^1(e^{ax_+^{3/2}}dx),
\end{equation}
then \[ u_1 \equiv  u_2.\]
\end{theorem}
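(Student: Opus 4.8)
The plan is to follow the Escauriaza--Kenig--Ponce--Vega strategy from \cite{E-KPV1}: reduce the statement to a pair of quantitative lower/upper estimates on the difference $w:=u_1-u_2$ and then play them against each other. The function $w$ solves a linear equation of the form
\begin{equation*}
\pwrt w + i\alpha\,\prx^2 w + \beta\,\prx^3 w = F,
\end{equation*}
where $F$ collects the differences of the nonlinear terms $i\gamma|u|^2u$, $\delta|u|^2\prx u$, $\ep u^2\prx\overline u$; using the identity $|u_1|^2u_1-|u_2|^2u_2 = $ (smooth bilinear factors)$\cdot w + (\ldots)\overline w$ and similarly for the others, one writes $F = a_1(x,t)w + a_2(x,t)\overline w + a_3(x,t)\prx w + a_4(x,t)\prx\overline w$ with coefficients $a_j$ bounded in terms of $\|u_i\|_{C([0,1];H^3)}$ (here the Sobolev embedding $H^3\hookrightarrow W^{1,\infty}$ is what forces $s=3$). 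So $w$ satisfies a Schr\"odinger-type differential inequality $|\pwrt w + i\alpha\prx^2 w + \beta\prx^3 w|\le C(|w|+|\prx w|)$.

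The two ingredients I would establish are: (i) a \emph{persistence-of-regularity / upper bound}, showing that the exponential weight $e^{ax_+^{3/2}}$ in the data is propagated, so that $\sup_{[0,1]}\|e^{\lambda x_+^{3/2}}w(t)\|_{L^2}<\infty$ for every $\lambda>0$ (this uses the $L^2(|x|^2dx)$ hypothesis to control the $x_-$ side and a weighted energy estimate with truncated weights $e^{\lambda(x;N)}$ that are bounded and Lipschitz, letting $N\to\infty$ at the end via Fatou); and (ii) a \emph{lower bound / Carleman estimate} for the operator $\pwrt + i\alpha\prx^2 + \beta\prx^3$ with weights $e^{\lambda x_+^{3/2}}$ — this is the Hirota analogue of the KdV Carleman inequality in \cite{E-KPV1}, and it should have the form
\begin{equation*}
\lambda^{?}\,\big\|e^{\lambda(\cdot)}g\big\|_{L^2(\R\times[0,1])}
\le C\,\big\|e^{\lambda(\cdot)}(\pwrt g + i\alpha\prx^2 g + \beta\prx^3 g)\big\|_{L^2(\R\times[0,1])}
\end{equation*}
for $g$ supported in a suitable space-time region, plus a localized lower bound saying that if $w$ does not vanish near $t=\tfrac12$ then $\|e^{\lambda(\cdot)}w\|$ is bounded below by something like $e^{c\lambda}$. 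Combining (i) (which dominates the right-hand side, since the first-order terms $a_3\prx w + a_4\prx\overline w$ can be absorbed for $\lambda$ large using the gain in the Carleman estimate) with (ii) and letting $\lambda\to\infty$ forces $w\equiv 0$ on a space-time slab, and a continuation/uniqueness argument for the linear equation then gives $w\equiv 0$ on $[0,1]$.

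The main obstacle I expect is the Carleman estimate in step (ii): unlike pure KdV, the operator here is $\beta\prx^3 + i\alpha\prx^2$, so the symbol is $i\beta\xi^3 - i\alpha\xi^2 + (\text{time})$ and the conjugated operator $e^{\lambda\f}(\pwrt + i\alpha\prx^2+\beta\prx^3)e^{-\lambda\f}$ picks up extra lower-order terms from the Schr\"odinger part $i\alpha\prx^2$ that are not present in the KdV analysis; one must check that for $\f = x_+^{3/2}$ (or a smoothed version thereof) the commutator structure still yields a positive-definite quadratic form after choosing $\lambda$ large, with the $\prx^2$ contribution treated as a perturbation controlled by the $\prx^3$-gain. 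A secondary technical point is justifying the weighted energy estimate in step (i) rigorously — the cubic nonlinearity and the derivative-nonlinearities mean the a priori calculation with the truncated weights must be done carefully so that all integrations by parts are legitimate and the bad terms have a favorable sign or can be absorbed; this is where the regularity $u_i\in C([0,1];H^3)$ and the decay $u_i\in L^2(|x|^2dx)$ are both needed.
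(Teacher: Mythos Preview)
Your overall architecture (upper bound $+$ Carleman lower bound $+$ contradiction, following \cite{E-KPV1}) is the same as the paper's, but both of your main steps are set up with the wrong weights and the wrong analytic machinery, and step~(i) as written would not close.

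\textbf{Step (i).} A weighted energy estimate with $\phi=e^{2\lambda x_+^{3/2}}$ does \emph{not} propagate: for the third-order part you get
\[
\frac{d}{dt}\int \phi\,|w|^2
  = \int \phi'''\,|w|^2 - 3\int \phi'\,|w_x|^2 + (\text{lower order}),
\]
and $\phi'''\sim 27\lambda^3 x^{3/2}\phi$ is an unbounded coefficient, so Gronwall fails. This is exactly why the paper (and \cite{E-KPV1}) never tries to propagate $x^{3/2}$-weights directly. The upper bound (Theorem~\ref{theo-1}) is obtained instead via \emph{dispersive mixed-norm estimates} with \emph{linear} weights $e^{mx}$: one proves estimates of the type in Lemma~\ref{lema-1},
\[
\|e^{mx}u\|_{L^8_{xt}}+\|e^{mx}\partial_x u\|_{L^{16}_xL^{16/5}_t}+\|e^{mx}\partial_x^2u\|_{L^\infty_xL^2_t}
\lesssim m^{2k}\big(\|Je^{mx}u(0)\|_2+\|Je^{mx}u(1)\|_2\big)+\|e^{mx}Hu\|_{\ldots},
\]
which crucially involve data at \emph{both} endpoints $t=0,1$ (this comes from splitting the symbol according to the sign of its real part, see \eqref{eq2.11}--\eqref{eq2.21}). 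One then localizes to $\{x>R\}$ and sets $m\sim R^{1/2}$, so that $e^{mx}\sim e^{R^{3/2}}$ on the localization error while $e^{mx}\gtrsim e^{2R^{3/2}}$ for $x>4R$; this optimization is what converts linear exponential decay into the $e^{-cR^{3/2}}$ rate. The mixed-norm conditions \eqref{mx01} on the coefficients $a_j,b_j$ are forced precisely by this dispersive approach; your $L^\infty$ bound on the coefficients is not enough.

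\textbf{Step (ii).} The Carleman weight is not $e^{\lambda x_+^{3/2}}$ but the time-shifted quadratic weight $e^{a(x/R+\varphi(t))^2}$ of Lemma~\ref{mx1}, with $\varphi$ a bump supported in $(1/4,3/4)$. The time dependence is essential: it localizes the lower bound to the interior slab where $w$ is assumed nontrivial, and produces the inequality $\delta_u(R)\ge c_0e^{-c_1R^{3/2}}$ of Theorem~\ref{teomx1}. The contradiction is then obtained by sending $R\to\infty$ (not $\lambda\to\infty$), comparing this with the upper bound $\delta_u(R)\le ce^{-aR^{3/2}/8}$ for $a\gg c_1$. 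Your concern about the $i\alpha\partial_x^2$ term is legitimate but secondary: in the commutator computation \eqref{mx5}--\eqref{mx12} the Schr\"odinger contributions are indeed absorbed as perturbations of the KdV commutator for $R\ge\alpha^2$, much as you anticipated.
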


To prove Theorem \ref{ucp3} we follow  the techniques introduced in
\cite{E-KPV1} by deriving some new estimates that are appropriate to work with the structure of the equation under consideration. Although the idea and estimates are similar to the ones introduced in \cite{E-KPV1}, the presence of the Schr\"odinger term in the linear part creates obstacle to obtain such estimates, which can be seen more explicitly in the derivation of the lower estimates in Section \ref{prel}. The proofs of several estimates that are crucial to prove the main results depend on the estimates obtained on our previous works \cite{CP1} and \cite{CP2}, where the exponential decay property of the solution was necessary. As observed in \cite{CP1} and \cite{CP2}, the presence of the third order derivative in
(\ref{0y0}) is fundamental to obtain the desired exponential decay property of the
solution. So we will suppose $\beta \neq 0$ throughout this work. To be more precise, let us recall the
following remark from \cite{CP1}.

\begin{remark}\label{obspat} We can suppose $\beta>0$.  In fact, for
$\alpha \neq 0$ we can suppose $\beta =|\alpha|/3$.

If $\beta <0$ we define $w(x,t)=u(-x,t)$ then $w$ is a solution to the
equation (\ref{0y0}) with the coefficient of the third derivative is positive.

If $\beta >0$ and $\alpha \neq 0$ we define
$w(x,t)=u(\tilde{a}^{-1}x,t)$ with $\tilde{a}=|\alpha| /3\beta$, then
$w$ is a solution of the equation \begin{equation} w_{t}+i\alpha
\tilde{a}^{2} w_{xx}+\beta \tilde{a}^{3}
w_{xxx}+i\gamma|w|^{2}w+\delta \tilde{a}|w|^{2}w_{x}+\epsilon
\tilde{a}w^{2}\bar{w}_x=0, \nonumber
\end{equation}
 and we have $\beta \tilde{a}^{3}=|\alpha| \tilde{a}^{2} /3$.
\end{remark}

As mentioned earlier, we are interested in finding a decay condition satisfied by the difference of two solutions at two different instants of time $t=0$ and $t=1$ that is sufficient to get the uniqueness of solution to the IVP associated to \eqref{0y0}. Note that, while treating with the difference of two solutions, we need to address an equation with variable coefficients (see \eqref{mx100} below). Therefore, in the first instant, we consider a more general equation,
\begin{equation}\label{mx00}
w_t+i \alpha w_{xx}+\beta w_{xxx}+ a_2(x,t)w_{xx}+a_1(x,t)w_{x}+b_1(x,t)\bar{w}_x+a_0(x,t)w+b_0(x,t)\bar{w}=0,
\end{equation}
and prove the following result.

\begin{theorem}\label{ucp5} 
Assume that the coefficients in \eqref{mx00} satisfy that
\begin{equation}\label{mx01}
\begin{cases}
a_0, b_0 \in L_{xt}^{4/3}\cap L_{x}^{16/13}L_{t}^{16/9}\cap L_{x}^{8/7}L_{t}^{8/3},\\
a_1, b_1 \in  L_{x}^{16/13}L_{t}^{16/9}\cap L_{x}^{8/7}L_{t}^{8/3}\cap L_{x}^{16/15}L_{t}^{16/3}, \\
a_2 \in L_{x}^{8/7}L_{t}^{8/3}\cap L_{x}^{16/15}L_{t}^{16/3}\cap L_{x}^{1}L_{t}^{\infty}.
\end{cases}
\end{equation}
and
\begin{equation}\label{mx02}
\begin{cases}
a_0, b_0, a_1, b_1, a_2, (a_0)_x, (b_0)_x, (a_1)_x, (b_1)_x, (a_2)_x, (a_2)_{xx}, (a_2)_{xxx}, (a_2)_{t} \in L^{\infty}(\R \times [0,1]),\\
a_2, (a_2)_{t} \in L_t^{\infty}([0,1]; L_x^{1}(\R )).
\end{cases}
\end{equation}
If $w \in C\left([0,1]; H^{2}(\R))\cap L^2(|x|^2dx) \right)$is a strong solution of \eqref{mx00} with
$$
w(\cdot, 0), w(\cdot, 1) \in H^{1}(e^{ax_+^{3/2}}dx), \quad \forall a>0,
$$
then $w\equiv0$.
\end{theorem}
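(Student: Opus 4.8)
The plan is to follow the scheme of Escauriaza--Kenig--Ponce--Vega \cite{E-KPV1}, adapting it to the Schr\"odinger--Airy linear operator $L = \partial_t + i\alpha\partial_x^2 + \beta\partial_x^3$. The argument rests on the interplay of two competing estimates for solutions of \eqref{mx00}: an \emph{upper} (Carleman-type) estimate saying that if the data at $t=0$ and $t=1$ decay like $e^{-ax_+^{3/2}}$ for every $a>0$, then in fact the solution $w$ satisfies a quantitative Gaussian-type bound $\|e^{\lambda\,\varphi(x,t)}w\|_{L^2_{xt}}\lesssim C$ with $\varphi$ growing like $x_+^{3/2}$ and with constant independent of a large parameter $\lambda$; and a \emph{lower} estimate of unique-continuation type saying that a nontrivial solution cannot decay that fast unless it vanishes. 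Combining the two forces $w\equiv 0$.

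The steps, in order, would be: (i) \textbf{Reductions.} Use Remark \ref{obspat} to assume $\beta>0$; truncate and regularize so that one may work with smooth solutions, then pass to the limit at the end using the hypotheses \eqref{mx01}--\eqref{mx02} on the coefficients. (ii) \textbf{Persistence of weighted regularity.} Show that the exponential spatial decay $w(\cdot,0),w(\cdot,1)\in H^1(e^{ax_+^{3/2}}dx)$ for all $a>0$, together with the equation, propagates to intermediate times in a controlled way; this is where the energy estimates from \cite{CP1,CP2} with weights $e^{ax_+^{3/2}}$ are invoked, exploiting crucially that $\beta\neq 0$ (the third-order term produces the favorable commutator term, while the Schr\"odinger term $i\alpha\partial_x^2$ must be absorbed). (iii) \textbf{Upper estimate.} Establish a Carleman inequality for $L$ with exponential weight $e^{\lambda\phi}$, where $\phi$ interpolates between the growth at the two endpoints; the symbol computation for $\tau + \alpha\xi^2\,(\text{from }i\alpha\partial_x^2) - \beta\xi^3$ dictates that the natural weight matches $x_+^{3/2}$ scaling, and one gets $\|e^{\lambda\phi}w\|\lesssim \|e^{\lambda\phi}Lw\|$ with the lower-order terms $a_j,b_j$ absorbed via the mixed-norm hypotheses \eqref{mx01} (Strichartz/Hölder estimates in $L^p_xL^q_t$). (iv) \textbf{Lower estimate.} Prove that if $w$ is a solution of \eqref{mx00} not identically zero on $[0,1]$, then $\liminf_{R\to\infty}e^{cR^{3/2}}\int_{R<|x|<2R}|w(x,1/2)|^2dx>0$ for some $c>0$ — the EKPV ``two-balls / frequency-function'' lower bound, adapted to the Schr\"odinger--Airy flow. (v) \textbf{Conclusion.} The uniform-in-$\lambda$ upper bound contradicts the lower bound unless $w\equiv0$.

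I expect the main obstacle to be step (iv), the lower estimate, precisely as the authors warn in the introduction: the presence of the Schr\"odinger term $i\alpha\partial_x^2$ in the linear part obstructs the clean monotonicity/log-convexity arguments that work for pure gKdV. One must either show the $i\alpha\partial_x^2$ contribution can be treated perturbatively (since $\beta\neq0$ makes the third-order term dominant at high frequency, which governs the $3/2$-decay rate) or modify the frequency function to accommodate it; controlling the variable coefficient $a_2(x,t)w_{xx}$ — a second-order perturbation — in this lower estimate is the delicate point, and is why the strong hypotheses \eqref{mx02} on $a_2$ and its derivatives (including $L^1_x$ control) are imposed. A secondary technical difficulty is the bookkeeping in step (ii): propagating the $e^{ax_+^{3/2}}$ weight through the equation requires commutator estimates where the Schr\"odinger term again does not cooperate as nicely as the Airy term, so one pays with the $L^2(|x|^2dx)$ hypothesis and the boundedness assumptions on the coefficients.
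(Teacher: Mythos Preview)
Your overall architecture --- an upper decay estimate, a lower decay estimate, and a contradiction between them --- is exactly right and matches the paper. However, the specific tools you assign to steps (iii) and (iv) are essentially swapped relative to what the paper (and \cite{E-KPV1}) actually does, and your separate step (ii) is not needed.

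For the \emph{upper} estimate the paper does not use a Carleman inequality with weight $e^{\lambda\phi}$, $\phi\sim x_+^{3/2}$. Instead it proves weighted dispersive/Strichartz bounds with the \emph{linear} weight $e^{mx}$ (Lemmas \ref{lema-1}--\ref{lema-3}): one controls $\|e^{mx}u\|_{L^8_{xt}}$, $\|e^{mx}\partial_x u\|_{L^{16}_xL^{16/5}_t}$, $\|e^{mx}\partial_x^2u\|_{L^\infty_xL^2_t}$ by the boundary data at $t=0,1$ plus $\|e^{mx}H_a u\|$ in dual mixed norms, via the explicit multiplier $[i(\tau-\beta\xi^3-\alpha\xi^2+3\beta m^2\xi+\alpha m^2)-(\beta m^3-2\alpha m\xi-3\beta m\xi^2)]^{-1}$. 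The mixed-norm hypotheses \eqref{mx01} on the coefficients are there precisely so the lower-order terms are absorbed into the left side. The $3/2$-power only enters at the very end, in Theorem \ref{theo-1}, by choosing $m=\tfrac{a}{2}R^{1/2}$ after localizing to $\{x>R\}$. In particular no separate ``persistence to intermediate times'' step (your (ii)) is carried out; the estimate goes directly from data at $t=0,1$ to a spacetime bound.

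For the \emph{lower} estimate the paper does not use a frequency-function or log-convexity argument; it uses a genuine Carleman inequality (Lemma \ref{mx1}) with the quadratic weight $e^{a(x/R+\varphi(t))^2}$ for the operator $\partial_t+i\alpha\partial_x^2+\beta\partial_x^3$, and this is where the Schr\"odinger term creates the new cross-commutators $[S_{a,\alpha};A_a]$, $[S_a;A_{a,\alpha}]$, $[S_{a,\alpha};A_{a,\alpha}]$ that must be shown to be dominated by the gKdV commutator $[S_a;A_a]$. A point you do not mention but which is essential: the variable second-order coefficient $a_2(x,t)\partial_x^2$ is eliminated before applying this Carleman estimate by the gauge transformation $v=u\,e^{\frac{1}{3\beta}\int_0^x a_2(s,t)\,ds}$ (Theorem \ref{teomx1}), which is why the $L^\infty_tL^1_x$ hypotheses on $a_2$ and $(a_2)_t$ in \eqref{mx02} are needed. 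Trying to absorb $a_2\partial_x^2$ directly into the Carleman lower bound would fail.
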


Once we get this theorem, the proof of the main theorem follows by proving that the variable coefficients involved in the equation in question satisfy the respective estimates.


Our next result is concerned with the existence of solution to the IVP associated to \eqref{0y0} that decays asymptotically in $x$. First, let us consider the IVP \eqref{lnpt1} associated to the linear part of \eqref{0y0}. The solution to the IVP \eqref{lnpt1} is given by,
\begin{equation}\label{lin-sol}
u(x,t) = \frac1{\sqrt[3]{3t}}G\left(\frac{\cdot}{\sqrt[3]{3t}}\right)*u_0(x),
\end{equation}
where
\begin{equation}
G(x)=\int_{\R}e^{\frac{i8\pi^3}{3}\eta^3+\frac{i\alpha t^{1/3}4\pi^2}{\sqrt[3]{9}}\eta^2+2\pi i \eta x}d\eta.
\end{equation}

With some easy calculations, one can obtain 
\begin{equation}\label{lp-8}
|G(x)|= |Ai(x-4\pi^2B^2)|,
\end{equation}
where $Ai$ is the usual Airy function given by
$$
Ai(x)= \int_{\R}e^{2\pi i x\xi+\frac83i\pi^3\xi^3}d\xi,
$$
and
$$B= \frac{\alpha t^{1/3}}{2\sqrt[3]{9}\pi}.
$$

If $x>8\pi^2B^2$, we get
\begin{equation}\label{air-25}
|Ai(x-4\pi^2B^2)| \leq C e^{-C(x-4\pi^2B^2)^{3/2}}\leq Ce^{-\frac{C}{2^{3/2}}x^{3/2}}.
\end{equation}

Therefore, from \eqref{lp-8} and \eqref{air-25} we have, for any $t\in [0, 1]$,
\begin{equation}\label{lp-9}
|G(x)| \leq Ce^{-\frac{C}{2^{3/2}}x^{3/2}}, 
\end{equation}
provided, $x>\frac{2\alpha^2}{3\sqrt[3]{3}}$.

The estimate \eqref{lp-9} shows that the decay condition in Theorem \ref{ucp3} is in accordance with the decay of the function $G$ that describes the solution of the linear part.

In what follows, we show the existence of a local solution to the IVP associated to \eqref{0y0}  that satisfies the similar decay property as the linear solution described above. More precisely, our second main theorem reads as follows.

\begin{theorem}\label{teoairy}
There exists $u_0 \in \mathbb{S}(\R)$, $u_0 \neq 0$ and $T>0$ such that the  IVP associated to \eqref{0y0} with data $u_0$ has a solution $u \in C([0,T]:\mathbb{S}(\R))$ which satisfies
$$
|u(x,t)| \le c e^{-x^{3/2}/3}, \quad x>1,\,\,t\in [0,T],
$$
for some constant $c>0$.
\end{theorem}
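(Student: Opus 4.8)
The plan is to construct the solution perturbatively around the linear evolution, starting from a carefully chosen Schwartz datum $u_0$ whose linear flow already enjoys the cubic-exponential decay, and then showing that the Duhamel iteration preserves this decay for a short time. First I would fix $u_0 = Ai(\cdot)$ (or a suitable smooth truncation/normalization of it lying in $\mathbb{S}(\R)$), so that, by the computation in \eqref{lin-sol}--\eqref{lp-9}, the free solution $U(t)u_0$ satisfies a pointwise bound of the form $|U(t)u_0(x)| \le c\, e^{-x^{3/2}/3}$ for $x>1$ and $t\in[0,1]$, possibly after shrinking the time interval and adjusting constants to absorb the $4\pi^2B^2$ shift coming from the Schr\"odinger term $i\alpha\partial_x^2$. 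The key analytic fact I need is a weighted estimate for the Duhamel operator: if $g$ is a time-dependent forcing with $\sup_t |g(x,t)| \le M e^{-x^{3/2}/3}$ for $x>1$, then $\int_0^t U(t-t')g(\cdot,t')\,dt'$ obeys the same bound with a constant that is $O(T)$ (or $O(T^{1/2})$) times $M$. This would follow from the explicit kernel $\frac1{\sqrt[3]{3t}}G(\cdot/\sqrt[3]{3t})$, the Airy decay \eqref{air-25}, and the elementary convolution inequality that $e^{-c x_+^{3/2}}$ is, up to constants and on the relevant region, preserved under convolution with the rapidly decaying (two-sided superpolynomial, one-sided cubic-exponential) Airy kernel.

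Next I would set up the contraction. Define the complete metric space
$$
X_T=\Big\{u\in C([0,T];\mathbb{S}(\R)) : \|u\|_{H^k} \le R \text{ for } k\le N,\ \sup_{t\in[0,T]}\sup_{x>1} e^{x^{3/2}/3}|u(x,t)| \le K\Big\},
$$
with norm combining a high-regularity Sobolev piece (to control the Schwartz character and the nonlinear terms $|u|^2u$, $|u|^2\partial_xu$, $u^2\partial_x\bar u$ via Sobolev algebra/Leibniz estimates) and the weighted sup-norm piece above. On $X_T$ the map
$$
\Phi(u)(t) = U(t)u_0 - \int_0^t U(t-t')\,\mathcal{N}(u)(t')\,dt',
$$
with $\mathcal{N}(u)=i\gamma|u|^2u+\delta|u|^2\partial_xu+\epsilon u^2\partial_x\bar u$, is shown to be a contraction for $T$ small: the Sobolev part is the standard local well-posedness argument for the Hirota equation (already available in the literature cited, e.g. \cite{CP1}, so $H^N$ local existence and persistence of Schwartz regularity is not the issue), and the weighted part uses the Duhamel estimate above together with the observation that $\mathcal{N}(u)$ inherits the decay $e^{-x^{3/2}/3}$ from $u$ — indeed each term is cubic in $u$ and its first derivative, so on $x>1$ one gains a factor $e^{-3x^{3/2}/3}$, far more than needed, while away from $x>1$ the $H^N$ bound controls things. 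Choosing $T$ and then $K,R$ appropriately closes the estimates.

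The main obstacle I expect is the interaction between the Schr\"odinger term and the weighted estimate: unlike the pure KdV/Airy case, the kernel carries the extra quadratic phase $e^{i\alpha t^{1/3}(\cdot)^2}$, which is why \eqref{lp-8}--\eqref{lp-9} only give the clean Airy decay for $x$ beyond the $t$-dependent threshold $4\pi^2B^2 \sim \alpha^2 t^{2/3}$. One must therefore either restrict to $T$ so small that this threshold stays below $1$ (so the stated region $x>1$ is safe), and then verify that the self-improving Duhamel bound still holds uniformly in $t\in[0,T]$ despite the time-dependent shift; or track the shifted weight $e^{-c(x-4\pi^2B^2)_+^{3/2}}$ through the iteration. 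A secondary technical point is ensuring that $\Phi$ genuinely maps into $C([0,T];\mathbb{S}(\R))$ — i.e. that the weighted decay, combined with the Sobolev bounds at every order, really does reproduce Schwartz regularity at each time slice — but this is routine once the two families of estimates above are in hand. The nonlinear terms themselves cause no trouble for the decay, since they are cubic and the decay is subadditive in the exponent.
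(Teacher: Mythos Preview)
Your overall strategy---build the solution by contraction around the free evolution and propagate the cubic-exponential decay through the Duhamel iteration---is exactly the paper's. But two concrete points separate your proposal from what actually works.

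First, the choice $u_0=Ai(\cdot)$ is not admissible: the Airy function oscillates like $|x|^{-1/4}$ as $x\to-\infty$, so it is not even in $L^2(\R)$, let alone $\mathbb{S}(\R)$. Your parenthetical ``truncation'' does not obviously repair this, because once you cut off on the left you lose the explicit formula for $U(t)u_0$ and it is no longer clear that the linear flow keeps the decay uniformly down to $t=0$. The paper sidesteps this entirely by taking $u_0=\varepsilon\, S_1\!*\psi$ with $\psi\in C_0^\infty(\R)$ supported in $(-\delta,\delta)$: this is automatically Schwartz (the Airy group preserves $\mathbb{S}$), it inherits the bound $|u_0(x)|\lesssim e^{-cx^{3/2}}$ for $x>1$ directly from the Airy kernel, and the free evolution at time $t$ is $\varepsilon\,S_{1+t}\!*\psi$, so the $t\downarrow 0$ singularity of the kernel never appears. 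The small parameter $\varepsilon$ (not just small $T$) is then used to close the contraction.

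Second, the paper does \emph{not} run the contraction in an exponentially weighted sup-norm as you propose. It works instead in the norm
\[
|\!|\!| w|\!|\!|_{T,s,k}=\sup_{[0,T]}\big(\|w\|_{H^s}+\|x^k w\|_{L^2}\big)+\|w\|_{L^2_xL^\infty_t}+\|\partial_x^{s+1}w\|_{L^\infty_xL^2_t},
\]
i.e.\ only polynomial weights plus the Kato smoothing pieces, and then verifies the pointwise bound $|F^n(x,t)|\le c\varepsilon^3 e^{-x^{3/2}}$ for $x>1/2$ (together with polynomial decay on the left) \emph{by induction on the iterates}, using the explicit decay of $S_t\!*\psi$. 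This avoids having to prove the ``weighted Duhamel estimate'' you state as a black box; that estimate is not as elementary as you suggest, because the Airy kernel only decays polynomially on the negative axis, so the convolution does not simply preserve $e^{-cx_+^{3/2}}$ without tracking the behaviour of the forcing for $x<1$ as well (the paper records precisely the three-region bound \eqref{lnpt5} for this reason). Your approach could in principle be made to work, but you would have to supply that two-sided analysis, whereas the paper's inductive bookkeeping with polynomial weights on the left and Airy decay on the right is what actually carries the argument.
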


We organize this article in the following manner. In Sections \ref{prel-1} and \ref{prel} we prove some preliminary estimates (upper estimate and lower estimate) which play a vital role
to prove our main theorem. In Section \ref{main-results} we present a proof of a more general result, Theorem \ref{ucp5}, and then the proofs of the main results of this work,
Theorem \ref{ucp3} and Theorem \ref{teoairy}. Before leaving this section, let us record
some notations that are used throughout this work.\\


\noindent
{\bf{Notations:}} We use $\hat{f}(\xi)$ and $\hat{f}(\xi, \tau)$ to denote the Fourier transform defined by $\hat{f}(\xi) = \frac{1}{\sqrt{2\pi}} \int
e^{-ix\xi}f(x)\,dx,$ and $\hat{f}(\xi,\tau) = \frac{1}{{2\pi}} \int
e^{-i(x\xi+t\tau)}f(x,t)\,dxdt$ respectively. We use $L_x^pL_t^q$ to denote mixed Lebesgue spaces. We write $A\lesssim B$
if there exists a constant $c >0$ such that $ A \leq cB$.

\secao{Upper  estimates}\label{prel-1}

This section is devoted to prove upper estimates that play crucial role in the proof of the main results. Let us first define the following operators
\begin{equation}\label{def-H}
Hf = (\partial_t +i\alpha \partial_x^2 +\beta \partial_x^3)f, \qquad H_mf=(\partial_t +e^{mx}(i\alpha \partial_x^2 +\beta \partial_x^3)e^{-mx})f.
\end{equation}
By Remark \ref{obspat}, we can suppose that $\beta>0$ and $|\alpha|/3=\beta$. Also, let us define
 $v:= e^{mx}u$, where $u$ is a solution to \eqref{0y0}. We begin with the following result.
 \begin{lemma}\label{lema-2}
The following estimate holds
\begin{equation}\label{eq2.8}
\|v\|_{L_{t}^{\infty}L_x^{2}} \leq C \Big( \|v(\cdot, 0)\|_{L^2}+\|v(\cdot, 1)\|_{L^2}\Big) + C\|H_mv\|_{L_{t}^{1}L_x^{2}}.
\end{equation}
\end{lemma}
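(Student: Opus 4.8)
The plan is to prove the bound \eqref{eq2.8} by an energy-type argument combined with a duality/Fourier-restriction estimate for the linear operator $H_m$. The first step is to understand the symbol of $H_m$: conjugating $i\alpha\partial_x^2+\beta\partial_x^3$ by $e^{mx}$ replaces $\partial_x$ by $\partial_x-m$, so $H_m$ has constant coefficients with spatial symbol $i\alpha(i\xi-m)^2+\beta(i\xi-m)^3$. Writing this out, the real part of the symbol (the part governing growth/decay) is dominated for large $|\xi|$ by a term of the form $3\beta m\xi^2$ plus lower order, while the purely imaginary (dispersive) part behaves like $-\beta\xi^3$. The key structural point, exactly as in Escauriaza--Kenig--Ponce--Vega, is that because $\beta>0$ and $|\alpha|/3=\beta$ the ``bad'' quadratic Schr\"odinger contribution can be absorbed; this is the place where the mixed KdV--Schr\"odinger structure forces the normalization in Remark \ref{obspat}.

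The core of the argument is then the following: let $w$ solve $H_m w = F$ on $\R\times[0,1]$ with prescribed data at $t=0$ and $t=1$. Split $w = w_{\rm hom} + w_{\rm inh}$, where $w_{\rm hom}$ solves the homogeneous equation $H_m w_{\rm hom}=0$ with appropriate endpoint data and $w_{\rm inh}$ is the Duhamel term with zero data. For $w_{\rm hom}$, I would use that the free group $e^{-tP_m}$ associated to the symbol above is, after removing the dispersive unitary factor, a Fourier multiplier whose $L^2_x\to L^2_x$ norm is controlled uniformly for $t\in[0,1]$ (this is where one checks that $\mathrm{Re}(\text{symbol})$, although not sign-definite, is bounded above on the relevant frequency range, or else that the growing part is compensated by combining the $t=0$ and $t=1$ data — concretely, one solves for the two unknown "halves" of the solution in terms of the two endpoint values, using that the problem is well-posed forward for one frequency regime and backward for the other). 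For $w_{\rm inh}$, Minkowski's inequality in time reduces the $L^1_tL^2_x$ forcing to an integral of the $L^2_x\to L^2_x$ propagator bounds already obtained, giving $\|w_{\rm inh}\|_{L^\infty_tL^2_x}\lesssim \|F\|_{L^1_tL^2_x}$. Adding the two pieces and taking $w=v$, $F=H_mv$ yields \eqref{eq2.8}.

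An alternative, and probably cleaner, route is the integration-by-parts/energy method directly on $v$: multiply $H_m v = H_m v$ by $\bar v$, integrate over $\R\times[0,t]$, take real parts, and track the boundary terms at time $0$ and $t$. The spatial integrals of the form $\int \mathrm{Re}(e^{mx}(i\alpha\partial_x^2+\beta\partial_x^3)e^{-mx}v)\,\bar v\,dx$ produce, after integration by parts, a positive-definite-in-$\xi$ quantity (up to lower order terms controllable by Gronwall) precisely because of the $\beta>0$, $|\alpha|=3\beta$ choice; this gives an a priori bound $\|v(t)\|_{L^2}^2 \lesssim \|v(0)\|_{L^2}^2 + \|H_mv\|_{L^1_tL^2_x}\sup_{[0,t]}\|v\|_{L^2}$, hence $\sup_t\|v\|_{L^2}\lesssim \|v(0)\|_{L^2}+\|H_mv\|_{L^1_tL^2_x}$. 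To get the symmetric dependence on both endpoints as stated, one runs the same estimate backward from $t=1$ and combines; a standard trick (cutting $[0,1]$ at a point where $\|v\|_{L^2}$ is near its minimum, or using a partition-of-unity in $t$) produces the two-sided form in \eqref{eq2.8}.

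The main obstacle I anticipate is controlling the non-self-adjoint, non-sign-definite lower-order terms generated by the conjugation $e^{mx}(\cdot)e^{-mx}$ — in particular terms linear in $m$ that are first order in $\partial_x$ and hence not obviously absorbable by the leading dispersion — uniformly in the parameter $m$, since the final application lets $m\to\infty$. Resolving this requires exploiting the algebraic cancellation between the quadratic-in-$m$ contributions of the KdV part and those of the Schr\"odinger part (the reason $|\alpha|/3=\beta$ is imposed), together with a Gronwall argument on $[0,1]$ whose constant must be shown independent of $m$; making that independence explicit, rather than merely finite for each fixed $m$, is the delicate point and the step where the Schr\"odinger term ``creates obstacles'' as the introduction warns.
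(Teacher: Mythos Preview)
Your first sketch --- propagate forward on the frequency set where the real part of the symbol is negative and backward where it is positive, then combine the two endpoint data --- is essentially the paper's argument. The paper executes it not as a homogeneous/Duhamel split, but by writing down the inverse operator $T$ on the space-time Fourier side, localizing in time with a cutoff $\eta_\varepsilon(t)$ so that $v_\varepsilon=\eta_\varepsilon v$ vanishes at the endpoints, and reducing the bound $\|Tf\|_{L^\infty_tL^2_x}\lesssim\|f\|_{L^1_tL^2_x}$ to the case $f(x,t)=f(x)\otimes\delta_{t_0}(t)$. The resulting one-variable operator $R$ (their formula for $\widehat{Rf}(\xi)$) is exactly your ``two halves'': on $\{b(\xi)>0\}$ the kernel is $e^{(t-t_0)b(\xi)}\chi_{(-\infty,0)}(t-t_0)$, on $\{b(\xi)<0\}$ it is $e^{(t-t_0)b(\xi)}\chi_{(0,\infty)}(t-t_0)$, and both are bounded by $1$ pointwise --- hence the $L^2_x\to L^2_x$ bound is uniform in $m$ and $t_0$ with no further work. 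The two-sided dependence on $v(\cdot,0)$ and $v(\cdot,1)$ comes out automatically from the $\eta_\varepsilon'$ term when $\varepsilon\to 0$, not from a separate backward-in-time argument.

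Your energy-method alternative, however, has a real gap. The real part of the spatial symbol of $H_m$ is $3\beta m\xi^2+2\alpha m\xi-\beta m^3$, which is neither sign-definite nor bounded as $m\to\infty$; multiplying by $\bar v$ and integrating produces a term of size $O(m)\|\partial_x v\|_{L^2}^2$ (plus worse), and the normalization $|\alpha|=3\beta$ does \emph{not} cancel this --- that normalization is used elsewhere in the paper (for the smoothing estimates and the lower bounds), but it plays no role in this lemma. A Gronwall argument would therefore give a constant like $e^{Cm}$, which is useless. The multiplier route avoids the issue entirely because the exponential $e^{(t-t_0)b(\xi)}$ is always paired with the characteristic function that makes the exponent nonpositive, so uniformity in $m$ is automatic.
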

\begin{proof}
We have
\begin{equation}\label{eq2.9}
H_m=\partial_t +e^{mx}(i\alpha \partial_x^2 +\beta \partial_x^3)e^{-mx}
= \partial_t +i\alpha(e^{mx} \partial_xe^{-mx})^2 +\beta (e^{mx} \partial_xe^{-mx})^3.
\end{equation}

Also using  $(e^{mx} \partial_xe^{-mx})^j= (\partial_x-m)^j$, $j=1,2,3$, we obtain
\begin{equation}\label{eq2.10}
H_mf= \partial_t +\beta\partial_x^3 +(i\alpha -3\beta m)\partial_x^2 +(3\beta m^2 -2i\alpha m)\partial_x +i\alpha m^2 -\beta m^3)f.
\end{equation}

The symbol of $H_m$ is given by
\begin{equation}\label{eq2.11}\begin{split}
 &i\tau -i\beta\xi^3 -(i\alpha -3\beta m)\xi^2 +(3\beta m^2 -2i\alpha m)i\xi +i\alpha m^2 -\beta m^3\\
 &= i(\tau -\beta \xi^3 -\alpha \xi^2 +3\beta m^2\xi +\alpha m^2)-(\beta m^3-2\alpha m\xi -3\beta m\xi^2).
 \end{split}
\end{equation}
Note that the real part of the symbol vanishes at
\begin{equation}\label{eq2.12}
\xi_{\pm} = \frac{-\alpha\pm \sqrt{\alpha^2 +3\beta^2m^2}}{3\beta}.
\end{equation}

As noted in \cite{E-KPV1}, by an approximation argument, it suffices to prove \eqref{eq2.8} for $v\in C^{\infty}([0, 1]; \s(\R))$ with $\hat{v}(\xi, t)=0$ near $\xi_{\pm}$ for all $t\in [0, 1]$.

Now, consider $f \in C^{\infty}([0, 1]; \s(\R))$ with $f(x,t) =0$ for $t$ near $0$ and $1$ so that we can extend $f$ as zero outside the strip $\R\times [0, 1]$. Also suppose that $\hat{f}(\xi, t) =0$ for $\xi$ near $\xi_{\pm}$ for all $t\in \R$. For such a function $f$, define an operator $T$ by
\begin{equation}\label{eq2.13}
\widehat{Tf}(\xi, \tau) := \frac{\hat{f}(\xi, \tau)}{i(\tau -\beta \xi^3 -\alpha \xi^2 +3\beta m^2\xi +\alpha m^2)-(\beta m^3-2\alpha m\xi -3\beta m\xi^2)}.
\end{equation}
We claim that the operator $T$ satisfies  the estimate
\begin{equation}\label{eq2.14}
\|Tf\|_{L_{t}^{\infty}L_x^2} \leq C\|f\|_{L_{t}^{1}L_x^2},
\end{equation}
which in turn implies \eqref{eq2.8}.

To prove this, let us define $\eta_{\varepsilon} \in C^{\infty}(\R)$, $\varepsilon \in (0, \frac14)$ such that
$$
\eta_{\varepsilon}(t) =1, \quad t\in [2\varepsilon, 1-2\varepsilon]; \qquad {\rm supp}\, \eta_{\varepsilon} \subset [\varepsilon, 1-\varepsilon].
$$

Define
$$v_{\varepsilon} =\eta_{\varepsilon}(t)v(x,t), \qquad f_{\varepsilon}(x,t) = H_m(v_{\varepsilon})(x,t),$$
then, $v_{\varepsilon} =Tf_{\varepsilon}$. Now \eqref{eq2.14} gives,
\begin{equation}\label{eq2.16}
\|v_{\varepsilon}\|_{L_{t}^{\infty}L_x^2} \leq C\|H_m(v_{\varepsilon})\|_{L_{t}^{1}L_x^2}\leq C\|\eta_{\varepsilon}´(t)v\|_{L_{t}^{1}L_x^2}+\|\eta_{\varepsilon}H_m(v)\|_{L_{t}^{1}L_x^2}.
\end{equation}

Letting $\varepsilon \to 0$, the left hand side of \eqref{eq2.16} converges to $\|v\|_{L_{[0, 1]}^{\infty}L_x^2}$ and the limit in the right hand side is bounded by
$$C\big(\|v(\cdot, 0)\|_{L^2} +\|v(\cdot, 1)\|_{L^2}\big) +C\|H_mv\|_{L_{t}^{1}L_x^2}.
$$

Therefore, our task is to prove \eqref{eq2.14}. As noted in \cite{E-KPV1},  it is enough to prove that for $f(x,t) =f(x)\otimes\delta_{t_0}(t)$, with $\hat{f}(\xi) =0$ near $\xi_{\pm}$, with $t_0 \in (0, 1)$, one has
\begin{equation}\label{eq2.17}
\|Tf\|_{L_{t}^{\infty}L_x^2} \leq C\|f\|_{L^2},
\end{equation}
where $C$ is independent of $t_0$.

Let us recall the formulas
\begin{equation}\label{eq2.18}
\Big(\frac{1}{\tau+ib}\Big)^{\vee} (t)= C \begin{cases} \chi_{(-\infty, 0)}(t)e^{tb}, \quad b>0\\
                                                                                                           \chi_{(0, \infty)}(t)e^{tb}, \quad b<0,
                                                                                 \end{cases}
\end{equation}
so that for $a, b \in \R$,
\begin{equation}\label{eq2.19}
\Big(\frac{e^{it_0\tau}}{\tau-a+ib}\Big)^{\vee} (t)
= C e^{ita} \begin{cases} \chi_{(-\infty, 0)}(t-t_0)e^{(t-t_0)b}, \quad b>0\\
                                                \chi_{(0, \infty)}(t-t_0)e^{(t-t_0)b}, \quad b<0.
                                                                                 \end{cases}
\end{equation}

Hence,
\begin{equation}\label{eq2.20}
\begin{split}
\widehat{Tf}(\xi, \tau) &:= \frac{e^{it_0\tau}\hat{f}(\xi)}{i\{(\tau -\beta \xi^3 -\alpha \xi^2 +3\beta m^2\xi +\alpha m^2)+i(\beta m^3-2\alpha m\xi -3\beta m\xi^2)\}}\\
& = -i \frac{e^{it_0\tau}\hat{f}(\xi)}{\tau -a(\xi)+ib(\xi)}.
\end{split}
\end{equation}

Combining \eqref{eq2.19} and \eqref{eq2.20}, it is clear that the operator $T$ acting on these functions becomes the one variable operator $R$ given by,
\begin{equation}\label{eq2.21}
\begin{split}
\widehat{Rf}(\xi) &= \big(\chi_{\{b(\xi)>0\}}(\xi)e^{ita(\xi)}e^{(t-t_0)b(\xi)}\chi_{(-\infty, 0)}(t-t_0) \big) \hat{f}(\xi)\\&\quad +\big(\chi_{\{b(\xi)<0\}}(\xi)e^{ita(\xi)}e^{(t-t_0)b(\xi)}\chi_{(0, \infty)}(t-t_0) \big) \hat{f}(\xi),
\end{split}
\end{equation}
for which we need to establish that
\begin{equation}\label{eq2.22}
\|Rf\|_{L_x^2} \leq C\|f\|_{L_x^2},
\end{equation}
with $C$ independent of $t_0$ and $m$.

But, looking at the multiplier in \eqref{eq2.21}, the estimate \eqref{eq2.22} holds true and this completes the proof.
\end{proof}

Our next result deals with the crucial upper estimate and reads as follows.

\begin{lemma}\label{lema-1}  There exists $k\in \Z^+ $ such that if $u\in C^{\infty}([0, 1]; C_0^{\infty}(\R))$, then for any $m\geq 1$, the following estimate holds:
\begin{equation}\label{eq2.2}
\begin{split}
\|e^{mx}u\|_{L_{xt}^8}+&\|e^{mx}\partial_xu\|_{L_{x}^{16}L_t^{16/5}}+\|e^{mx}\partial_x^2u\|_{L_{x}^{\infty}L_t^{2}}\\
&\leq Cm^{2k}\Big(\|J(e^{mx}u(\cdot, 0))\|_{L^2}+\|J(e^{mx}u(\cdot, 1))\|_{L^2}\Big)\\
&\quad + C \Big(\|e^{mx}Hu\|_{L_{xt}^{8/7}}+\|e^{mx}Hu\|_{L_x^{16/15}L_t^{16/11}}+\|e^{mx}Hu\|_{L_{x}^{1}L_t^2}\Big),
\end{split}
\end{equation}
 where $\widehat{Jg}(\xi):=(1+|\xi^2\|)^{1/2}\hat{g}(\xi)$ and $\|\cdot\|_{L_t^p}$ are restricted in $[0, 1]$.
\end{lemma}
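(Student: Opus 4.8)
\textbf{Proof proposal for Lemma \ref{lema-1}.}

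The plan is to derive the mixed-norm estimate for $v := e^{mx}u$ by combining three ingredients: (i) the $L_t^\infty L_x^2$ bound for the conjugated operator $H_m$ already established in Lemma \ref{lema-2}; (ii) smoothing/Strichartz-type estimates for the Airy--Schr\"odinger propagator associated with $H_m$, carefully tracking the dependence on $m$; and (iii) an energy-type argument to control the top-order piece $\|e^{mx}\partial_x^2 u\|_{L_x^\infty L_t^2}$ (the local smoothing gain of one derivative measured in $L_x^\infty L_t^2$, which is the natural Kato-smoothing norm for a third-order dispersive operator). Concretely, I would first note $e^{mx}Hu = H_m(e^{mx}u) = H_m v$, so the right-hand side of \eqref{eq2.2} is a collection of space-time norms of $H_m v$ plus the endpoint data $\|J v(\cdot,0)\|_{L^2}+\|Jv(\cdot,1)\|_{L^2}$. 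The left-hand side is a collection of space-time norms of $v$, $\partial_x v$ (up to lower-order corrections from differentiating $e^{mx}$, which cost powers of $m$ — hence the factor $m^{2k}$) and $\partial_x^2 v$. So the lemma reduces to proving a fixed set of linear estimates for the operator $T$ (equivalently $R$) introduced in the proof of Lemma \ref{lema-2}, now in stronger norms.

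The key steps, in order, would be: First, reduce as in Lemma \ref{lema-2} to $f = f(x)\otimes \delta_{t_0}(t)$ with $\hat f$ vanishing near $\xi_\pm$, and to a one-variable multiplier operator $R$ whose symbol involves $e^{ita(\xi)}e^{(t-t_0)b(\xi)}$ times cutoffs in the sign of $b(\xi)$. Second, establish for $R$ the three Strichartz-type bounds dual to the three terms on the right of \eqref{eq2.2}: an $L_{xt}^8$ bound, an $L_x^{16}L_t^{16/5}$ bound with one derivative, and the $L_x^\infty L_t^2$ smoothing bound with two derivatives — each with an $m$-independent (or at worst polynomially $m$-dependent) constant after the factor $m^{2k}$ is extracted. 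Here the exponents $(8,8)$, $(16,16/5)$ and $(\infty,2)$ are exactly the admissible triples that interpolate between the $L^\infty_t L^2_x$ energy estimate and the Kato smoothing estimate for the Airy operator (and its dual exponents $(8/7,8/7)$, $(16/15,16/11)$, $(1,2)$ appear on the inhomogeneous side via a $TT^*$/Christ--Kiselev argument). Third, handle the real-part perturbation: the symbol of $H_m$ differs from that of $H$ by the lower-order terms $(i\alpha - 3\beta m)\partial_x^2 + (3\beta m^2 - 2i\alpha m)\partial_x + (i\alpha m^2 - \beta m^3)$; one absorbs these by noting $b(\xi) = \beta m^3 - 2\alpha m\xi - 3\beta m\xi^2$ stays bounded away from $0$ except near $\xi_\pm$, which are excluded, so the factor $e^{(t-t_0)b(\xi)}$ together with the sign cutoffs produces a bounded (actually decaying) multiplier — this is where the $\hat v(\xi,t)=0$ near $\xi_\pm$ reduction is essential. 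Fourth, translate back: convert the $R$-estimates into $v$-estimates by Minkowski / Christ--Kiselev to pass from the $\delta_{t_0}$ case to general $L_t^1 L_x^2$ forcing, and absorb the commutator terms $[\partial_x^j, e^{mx}]$ — each commutator loses one order in $\xi$ but gains a power of $m$, explaining why a finite power $m^{2k}$ suffices and why $J = (1+\xi^2)^{1/2}$ (one derivative) appears on the data.

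The main obstacle I expect is the interplay between the Schr\"odinger term $i\alpha\partial_x^2$ and the exponential weight $e^{mx}$: unlike the pure generalized-KdV case in \cite{E-KPV1}, the conjugated operator $H_m$ has a genuinely complex symbol whose \emph{real} part (the "$b(\xi)$" piece) is a quadratic, not linear, function of $\xi$ and vanishes at \emph{two} points $\xi_\pm$ rather than one, and whose location depends on $m$ through \eqref{eq2.12}. Consequently the stationary/critical-point analysis underlying the Strichartz and smoothing estimates must be done uniformly in $m$, and one must verify that excising neighborhoods of $\xi_\pm$ (whose size can be taken $m$-independent) does not destroy the needed frequency-localized bounds; controlling the resulting multipliers with constants that are at most polynomial in $m$ — so that they are beaten by the explicit gain $e^{ax_+^{3/2}}$ furnished by the hypothesis on the data — is the delicate point. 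This is exactly the difficulty the authors flag in the introduction ("the presence of the Schr\"odinger term in the linear part creates obstacle to obtain such estimates"). I would therefore organize the proof around a single frequency decomposition $\hat v = \hat v\,\chi_{|\xi-\xi_\pm|>c} + (\text{excised part, which is }0\text{ by hypothesis})$, prove the three estimates for $R$ on the good region with explicit $m$-powers, and then reassemble.
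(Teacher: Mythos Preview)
Your high-level outline (set $v=e^{mx}u$, reduce to the multiplier operators $T$ and $R$ from Lemma~\ref{lema-2}, prove three dual-pair estimates, track $m$-powers) matches the paper, but you have misplaced where the real work is. The two zeros $\xi_\pm$ of $b(\xi)$ are \emph{not} the analytic obstacle: as in Lemma~\ref{lema-2}, one simply reduces by density to $v$ with $\hat v(\xi,t)=0$ near $\xi_\pm$, and that is the end of it. Your proposed decomposition $\chi_{|\xi-\xi_\pm|>c}$ therefore does nothing new, and your plan to ``absorb'' the $m$-dependent lower-order terms of $H_m$ as perturbations cannot work uniformly in $m$ (those terms carry coefficients of size $m,m^2,m^3$).

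The paper instead works directly with the full symbol of $H_m$ and introduces, for the two-derivative smoothing bound \eqref{eq2.7}, the operator $T_2$ with symbol $(i\xi-m)^2$ times $\widehat{Tf}$, and splits it with a cutoff $\theta_m$ at scale $|\xi|\sim 3m$. On the low-frequency piece $|\xi|\lesssim m$ one uses Sobolev embedding $L_x^\infty\hookleftarrow JL_x^2$ followed by the $L_t^\infty L_x^2$ bound \eqref{eq2.14} from Lemma~\ref{lema-2}; here $(i\xi-m)^2(1+\xi^2)^{1/2}$ is $O(m^2)$, which is the origin of the $m^{2k}$ factor (not commutators $[\partial_x^j,e^{mx}]$ as you suggest). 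On the high-frequency piece $|\xi|\ge 3m$ one performs the change of variable $\lambda=a(\xi)$, which is one-to-one there because (using the normalization $|\alpha|=3\beta$ from Remark~\ref{obspat}) $a'(\xi)\sim\xi^2$, and then invokes a Coifman--Meyer multiplier bound to get $L_x^\infty L_t^2$ control with an $m$-independent constant. The one-derivative estimate \eqref{eq2.6} is obtained by Littlewood--Paley interpolation between the $j=0$ estimate \eqref{eq2.5} and the $j=2$ estimate \eqref{eq2.7}; no separate Christ--Kiselev argument is used. Several of the base inequalities (the $L^8_{xt}$ and $L_x^{16}L_t^{16/5}$ bounds for $T$, $T_1$) are imported from \cite{CP2}. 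You should restructure your argument around this $\theta_m$-splitting and the change of variables, rather than around the $\xi_\pm$ excision.
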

\begin{proof}
 As noted in the beginning of this section, by Remark \ref{obspat}, we can suppose that $\beta>0$ and $|\alpha|/3=\beta$. Let us define
 \begin{equation}\label{eq2.3}
 v= e^{mx}u\in C^{\infty}([0, 1]; \s(\R)),
 \end{equation}
  then  the estimate \eqref{eq2.2} can be written as
  \begin{equation}\label{eq2.4}
  \begin{split}
  \|v\|_{L_{xt}^8}&+ \|e^{mx}\partial_xe^{-mx}v\|_{L_{x}^{16}L_t^{15/5}}+\|e^{mx}\partial_x^2e^{-mx}\|_{L_{x}^{\infty}L_t^{2}}\\
&\leq Cm^{2k}\Big(\|Jv(\cdot, 0)\|_{L^2}+\|Jv(\cdot, 1)\|_{L^2}\Big)\\
&\quad + C \Big(\|H_mv\|_{L_{xt}^{8/7}}+\|H_mv\|_{L_x^{16/15}L_t^{16/11}}+\|H_mv\|_{L_{x}^{1}L_t^2}\Big).
\end{split}
\end{equation}

The estimate \eqref{eq2.4} will hold true if we can prove the following set of estimates
\begin{equation}\label{eq2.5}
\|v\|_{L_{xt}^8} \leq C \Big( \|v(\cdot, 0)\|_{L^2}+\|v(\cdot, 1)\|_{L^2}\Big) + C\|H_mv\|_{L_{xt}^{8/7}},
\end{equation}
\begin{equation}\label{eq2.6}
\|e^{mx}\partial_xe^{-mx}v\|_{L_{x}^{16}L_t^{16/5}} \leq C m^k\Big( \|J^{1/2}v(\cdot, 0)\|_{L^2}+\|J^{1/2}v(\cdot, 1)\|_{L^2}\Big) + C\|H_mv\|_{L_{x}^{16/15}L_t^{16/11}}
\end{equation}
and
\begin{equation}\label{eq2.7}
\|e^{mx}\partial_x^2e^{-mx}v\|_{L_{x}^{\infty}L_t^{2}} \leq C m^{2k}\Big( J\|v(\cdot, 0)\|_{L^2}+\|Jv(\cdot, 1)\|_{L^2}\Big) + C\|H_mv\|_{L_{x}^{1}L_t^{2}}.
\end{equation}

\noindent
 We start by proving the estimate \eqref{eq2.5}:
As in Lemma \ref{lema-2}, it is enough to prove \eqref{eq2.5} for $v\in C^{\infty}([0, 1]:\mathcal{S}(\R))$ such that $\hat{v}(\xi, t) =0$ near $\xi_{\pm}$. Suppose that $f\in C^{\infty}([0, 1]:\mathcal{S}(\R))$ with $f(x,t)=0$ for $t$ near $0$ and $1$, so we can extend $f$ to 0 outside the strip $\R\times [0, 1]$. Also suppose that $\hat{f}(\xi, t) =0$ for $\xi$ near $\xi_{\pm}$ for all $t\in \R$. We will show that for the operator $T$ defined in \eqref{eq2.13}
 \begin{equation}\label{eq.estT1}
 \|Tf\|_{L_{xt}^8}\leq C\|f\|_{L_{xt}^{8/7}}
 \end{equation}
 and
 \begin{equation}\label{eq.estT2}
 \|Tf\|_{L_{xt}^8}\leq C\|f\|_{L_t^1L_{x}^{2}},
 \end{equation}
 for $f\in \mathcal{S}(\R^2)$ with $\hat{f}(\xi, t) =0$ for $\xi$ near $\xi_{\pm}$ for all $t\in \R$.

 The estimate \eqref{eq.estT1} is proved in \cite{CP2}. To get \eqref{eq.estT2} we restrict to consider $f(x,t) =f(x) \otimes \delta_{t_0}(t)$, and reduce the case to show that the operator $R$ defined in \eqref{eq2.21} satisfies
 \begin{equation}\label{eq.estT3}
 \|Rf\|_{L_{xt}^8}\leq C\|f\|_{L^{2}},
 \end{equation}
with $C$ independent of $m$ and $t_0$. But this is done in \cite{CP2}

Now we show that estimates \eqref{eq.estT1} and \eqref{eq.estT2} imply the estimate \eqref{eq2.5}. For this, consider
\begin{equation}\label{eq2.24}
v_{\varepsilon}(x,t) = \eta_{\varepsilon}(t)v(x,t), \qquad H_m(v_{\varepsilon})=\eta_{\varepsilon}'(t)v+\eta_{\varepsilon}H_m(v) =f_1(x,t)+f_2(x,t).
\end{equation}
Suppose,
\begin{equation}\label{eq2.25}
v_1(x,t)= Tf_1(x,t), \qquad v_2(x,t)= Tf_2(x,t),
\end{equation}
where both make sense because of our assumption on $v$. Then,
\begin{equation}\label{eq2.26}
v_{\varepsilon}(x,t) =v_1(x,t) +v_2(x,t),
\end{equation}
since both sides are in $l_{xt}^2$ and have the same Fourier transform. Hence, from \eqref{eq.estT1} and \eqref{eq.estT2} it follows that
\begin{equation}\label{eq2.26a}
\begin{split}
\|v_{\varepsilon} \|_{L_{xt}^8}&\leq \|v_1\|_{L_{xt}^8}+\|v_2\|_{L_{xt}^8}\leq C\|f_1\|_{L_t^1L_x^2} +C\|f_2\|_{L_{xt}^{8/7}}\\
&\leq C\|\eta_{\varepsilon}'(t)v\|_{L_t^1L_x^2} +C\|\eta_{\varepsilon}(t)H_mv\|_{L_{xt}^{8/7}}.
\end{split}
\end{equation}
Now, letting $\varepsilon\to 0$ we get the required estimate \eqref{eq2.5}.

\noindent
Next, we prove the estimate \eqref{eq2.7}:
As earlier, here too we make our usual assumptions on $\hat{v}(\xi, \tau)$. For $f \in \mathcal{S}(\R^2)$ with $\hat{f}(\xi, t) =0$ near $\xi_{\pm}$ for all $t\in \R$ we define
\begin{equation}\label{eq2.27}
\begin{split}
\widehat{T_2f}(\xi,\tau):&=(i\xi-m)^2\widehat{Tf}(\xi, \tau)\\
&= \frac{(i\xi-m)^2\hat{f}(\xi, \tau)}{i(\tau -\beta \xi^3 -\alpha \xi^2 +3\beta m^2\xi +\alpha m^2)-(\beta m^3-2\alpha m\xi -3\beta m\xi^2)}.
\end{split}
\end{equation}

Let \begin{equation}\label{eq2.28}
\tilde{T_2}f(x,t) = \chi_{[0,1]}T_2f(x,t).
\end{equation}
We will show that

\begin{equation}\label{eq2.29}
\|\tilde{T_2}f\|_{L_x^{\infty}L_t^2}\leq C\|f\|_{L_x^{1}L_t^2},
\end{equation}
\begin{equation}\label{eq2.30}
\|\tilde{T_2}f\|_{L_x^{\infty}L_t^2}\leq Cm^2\|Jf\|_{L_t^{1}L_x^2}.
\end{equation}

Before proving \eqref{eq2.29} and \eqref{eq2.30}, we show that these estimates imply \eqref{eq2.7}. Using the notations introduced in \eqref{eq2.24}, \eqref{eq2.25} and \eqref{eq2.26}, the estimates \eqref{eq2.29} and \eqref{eq2.30} yield
\begin{equation}
\begin{split}
\|(\partial_x-m)^2v_{\varepsilon}\|_{L_x^{\infty}L_t^2}&\leq \|\chi_{[0,1]}(\partial_x-m)^2v_{1}\|_{L_x^{\infty}L_t^2} +\|\chi_{[0,1]}(\partial_x-m)^2v_{2}\|_{L_x^{\infty}L_t^2}\\
&\leq \|\tilde{T_2}f_1\|_{L_x^{\infty}L_t^2} +\|\tilde{T_2}f_2\|_{L_x^{\infty}L_t^2}\\
&\leq Cm^2\|Jf_1\|_{L_t^{1}L_x^2} +C\|f_2\|_{L_x^{1}L_t^2}\\
&\leq Cm^2\|\eta_{\varepsilon}´(t)Jv\|_{L_t^{1}L_x^2} + C\|\eta_{\varepsilon}(t)H_mv\|_{L_x^{1}L_t^2}.
\end{split}
\end{equation}

Now in the limit as $\varepsilon \to 0$ we get \eqref{eq2.7}. So, to complete the proof of \eqref{eq2.7} it is enough to prove \eqref{eq2.29} and \eqref{eq2.30}.

With minor modification from the argument in \cite{CP2}, we get
\begin{equation}
\|T_2f\|_{L_x^{\infty}L_t^2} \leq C\|f\|_{L_x^{1}L_t^2},
\end{equation}
which in turn implies \eqref{eq2.29}.

Now we move to prove \eqref{eq2.30}. Let $\theta_r \in C_0^{\infty}(\R)$ with $\theta_r (x) =1$ for $|x|\leq 3r$ and $supp\; \theta_r\subset \{|x|\leq 4r\}$ and consider
\begin{equation}
\begin{split}
\widehat{T_2f}(\xi, \tau) & = \frac{\theta_m(\xi)(i\xi-m)^2\hat{f}(\xi, \tau)}{i(\tau -\beta \xi^3 -\alpha \xi^2 +3\beta m^2\xi +\alpha m^2)-(\beta m^3-2\alpha m\xi -3\beta m\xi^2)}\\
& \qquad +\frac{(1-\theta_m(\xi))(i\xi-m)^2\hat{f}(\xi, \tau)}{i(\tau -\beta \xi^3 -\alpha \xi^2 +3\beta m^2\xi +\alpha m^2)-(\beta m^3-2\alpha m\xi -3\beta m\xi^2)}\\
& =\widehat{T_{2,1}f}(\xi, \tau)+\widehat{T_{2,2}f}(\xi, \tau).
\end{split}
\end{equation}

Let $\tilde{T}_{2,1} :=\chi_{[0,1]}T_{2,1}$. From the Sobolev lemma we obtain
\begin{equation}\label{eq2.31}
\|\tilde{T}_{2,1}f\|_{L_x^{\infty}L_t^2} \leq C\|J\tilde{T}_{2,1}f\|_{L_x^{2}L_t^2}=C\|J\tilde{T}_{2,1}f\|_{L_t^{2}L_x^2}\leq C\|JT_{2,1}\|_{L_t^{\infty}L_x^2}.
\end{equation}

Now suppose,
$$ \hat{g_1}(\xi, \tau) =\theta_m(\xi)(1+|\xi|^2)^{1/2}(i\xi-m)^2\hat{f}(\xi,\tau),$$
so that $$JT_{2,1}f(x,t) = Tg_1(x,t)$$
and therefore from \eqref{eq2.14} and \eqref{eq2.31} it follows that
\begin{equation}\label{eq2.32}
\|\tilde{T}_{2,1}f\|_{L_x^{\infty}L_t^2} \leq C\|g_1\|_{L_t^{2}L_x^2}=Cm^2\|Jf\|_{L_t^{1}L_x^2}.
\end{equation}

To complete \eqref{eq2.30}, it is enough to prove
\begin{equation}\label{eq2.33}
\|T_{2,2}f\|_{L_x^{\infty}L_t^2} \leq C\|Jf\|_{L_t^{1}L_x^2}.
\end{equation}

Arguing  as  in \cite{E-KPV1}, the proof of this estimate can be reduced to consider functions of the form $f(x,t) =f(x)\otimes\delta_{t_0}(t)$; so that we just need to bound the operator
\begin{equation}
\widehat{R_{2,2}}(\xi, t) = (1-\theta_m(\xi))(i\xi-m)^2\chi_{\{b(\xi)<0\}}(\xi) e^{ita(\xi)}e^{(t-t_0)b(\xi)}\chi_{(0,\infty)}(t-t_0)\hat{f}(\xi),
\end{equation}
as
\begin{equation}
\|R_{2,2}f\|_{L_x^{\infty}L_t^2}\leq C\|Jf\|_{L_x^2},
\end{equation}
with $C$ independent of $m$ and $t_0$.

Let us write
\begin{equation}\label{eq2.34}
R_{2,2}f(x,t)= \int e^{ix\xi}(1-\theta_m(\xi))(i\xi-m)^2\chi_{\{b(\xi)<0\}}(\xi) e^{ita(\xi)}e^{(t-t_0)b(\xi)}\chi_{(0, \infty)}(t-t_0)\hat{f}(\xi)d\xi
\end{equation}
and recall that $a(\xi) = \beta\xi^3+\alpha\xi^2-3\beta m^2\xi-\alpha m^2$. Now, making change of variable $\lambda = a(\xi)$ we get $d\lambda = (3\beta\xi^2 +2\alpha\xi-3\beta m^2) d\xi$.

From the definition of $\theta_m(\cdot)$, the domain of integration in \eqref{eq2.34} is equal to $\{|\xi|\geq 3m\}$ where $|3\beta\xi^2 +2\alpha\xi-3\beta m^2|\cong |\xi|^2$ in fact if $\alpha \neq 0$, by Remark \ref{obspat}
\begin{align*}
|3\beta\xi^2 +2\alpha\xi-3\beta m^2| = &|\alpha|\, |\xi^2 \pm 2\xi- m^2|\\
\ge &|\alpha|( |\xi|^2- m^2-2|\xi|)\\
\ge &|\alpha|((8/9) |\xi|^2-2|\xi|)=|\alpha|\,|\xi|\{(8/9) |\xi|-2\}\geq |\alpha||\xi^2|/9,
\end{align*}
and the transformation is one-to-one since $a'(\xi)= |\alpha|( \xi^2 \pm 2 \xi-m^2) \gtrsim \xi^2$.

Thus we have $\xi = \xi(\lambda)$ and
\begin{equation}
\begin{split}
R_{2,2}f(x,t) &= \int e^{it\lambda}\frac{ e^{ix\xi}(1-\theta_m(\xi))(i\xi-m)^2}{3\beta\xi^2 +2\alpha\xi-3\beta m^2}\chi_{\{b(\xi)<0\}}(\xi) e^{(t-t_0)b(\xi)}\chi_{(0, \infty)}(t-t_0)\hat{f}(\xi)d\lambda\\
& = \int e^{it\lambda}\hat{g_ 2}(\lambda)\psi(\lambda, t)d\lambda,
\end{split}
\end{equation}
with
$$
\hat{g_2}(\lambda) = \frac{ e^{ix\xi}(1-\theta_m(\xi))(i\xi-m)^2}{3\beta\xi^2 +2\alpha\xi-3\beta m^2}\hat{f}(\xi),
$$
$$
\psi(\lambda, t) = \chi_{\{b(\xi)<0\}}(\xi) e^{(t-t_0)b(\xi)}\chi_{(0, \infty)}(t-t_0).
$$

Observe that,
$$|\psi(\lambda, t)| \leq C, \qquad \forall\; (\lambda, t)\in \R^2$$
and
$$\int |\partial_t\psi(\lambda, t)|dt\leq C\qquad \forall\; \lambda \in \R.$$

Therefore, using the result in \cite{CM1} and taking adjoint we get,
\begin{equation}
\begin{split}
\|\int e^{it\lambda}\hat{g_2}(\lambda)\psi(\lambda, t) d\lambda\|_{L_t^2}&\leq C\|\hat{g_2}\|_{L^2}\\
&\leq C\Big(\int\frac{|e^{x\xi}(1-\theta_m(\xi))(i\xi-m)^2\hat{f}(\xi)}{|3\beta\xi^2 +2\alpha\xi-3\beta m^2||3\beta\xi^2 +2\alpha\xi-3\beta m^2|}d\xi\Big)^{1/2}\\
&\leq C \Big(\int \frac{|1-\theta_m(\xi)|^2|\xi^2+m^2|^2|\hat{f}(\xi)|^2}{|3\beta\xi^2 +2\alpha\xi-3\beta m^2|}d\xi\Big)^{1/2}\\
&\leq C \|Jf\|_{L^2}
\end{split}
\end{equation}
which is \eqref{eq2.30}.

\noindent
Finally, we supply a proof of the estimate \eqref{eq2.6}:
At this point too, let us make the usual assumptions on $v$ and $\hat{v}$. For $f\in \mathcal{S}(\R^2)$ with $\hat{f}(\xi, t) =0$ near $\xi_{\pm}$ for all $t\in \R$, we define using \eqref{eq2.13}

\begin{equation}\label{eq2.35}
\widehat{T_1f}(\xi, \tau) = (i\xi-m)\widehat{Tf}(\xi, \tau) = \frac{(i\xi-m)\hat{f}(\xi, \tau)}{i(\tau -\beta \xi^3 -\alpha \xi^2 +3\beta m^2\xi +\alpha m^2)-(\beta m^3-2\alpha m\xi -3\beta m\xi^2)}.
\end{equation}

Now define,
\begin{equation}\label{eq2.36}
\tilde{T}_1f(x,t) = \chi_{[0, 1]}(t)T_1f(x,t).
\end{equation}

We claim that
\begin{equation}\label{eq2.37}
\|\tilde{T}_1f\|_{L_x^{16}L_t^{16/5}}\leq C\|f\|_{L_x^{16/15}L_t^{16/11}}
\end{equation}
and
\begin{equation}\label{eq2.38}
\|\tilde{T}_1f\|_{L_x^{16}L_t^{16/5}}\leq Cm\|J^{1/2}f\|_{L_t^{1}L_x^{2}}.
\end{equation}

As earlier, the estimate \eqref{eq2.6} easily follows from the estimates \eqref{eq2.37} and \eqref{eq2.38}. Let us recall,  in \cite{CP2} it was proved that
\begin{equation}
\|T_1f\|_{L_x^{16}L_t^{16/5}}\leq C\|f\|_{L_x^{16/15}L_t^{16/11}},
\end{equation}
which implies \eqref{eq2.37}. To obtain \eqref{eq2.38} we write $T_1$ in the following way

\begin{equation}
\begin{split}
\widehat{T_1f}(\xi, \tau) &= \frac{\theta_m(\xi)(i\xi-m)\hat{f}(\xi, \tau)}{i(\tau -\beta \xi^3 -\alpha \xi^2 +3\beta m^2\xi +\alpha m^2)-(\beta m^3-2\alpha m\xi -3\beta m\xi^2)}\\
&\qquad + \frac{(1-\theta_m(\xi))(i\xi-m)\hat{f}(\xi, \tau)}{i(\tau -\beta \xi^3 -\alpha \xi^2 +3\beta m^2\xi +\alpha m^2)-(\beta m^3-2\alpha m\xi -3\beta m\xi^2)}\\
&\qquad \widehat{T_{1,1}f}(\xi, \tau)+\widehat{T_{1,2}f}(\xi, \tau).
\end{split}
\end{equation}

Let $\tilde{T}_{1,1} =\chi_{[0,1]}(t)T_{1,1}$. Now from \eqref{eq2.32} we have
\begin{equation}
\|\tilde{T}_{2,1}f\|_{L_x^{\infty}L_t^2} \leq Cm^2\|Jf\|_{L_t^1L_x^2}
\end{equation}
and from \eqref{eq.estT2} we get
\begin{equation}
\|\tilde{T}_{0,1}f\|_{L_{xt}^8} \leq C\|f\|_{L_t^1L_x^2}.
\end{equation}

Hence, using the interpolation argument based on the Littlewood-Paley decomposition as in \cite{KPV-UC2} we obtain
\begin{equation}
\|\tilde{T}_{1,1}\|_{L_x^{16}L_t^{16/5}} \leq Cm \|J^{1/2}f\|_{L_t^1L_x^2}.
\end{equation}
Finally we interpolate between
\begin{equation}
\|\tilde{T}_{0,2}f\|_{L_{xt}^8} \leq C\|f\|_{L_t^1L_x^2},
\end{equation}
which follows from \eqref{eq.estT1}, with \eqref{eq2.32} to get
\begin{equation}
\|\tilde{T}_{1,2}f\|_{L_x^{16}L_t^{16/5}} \leq Cm \|J^{1/2}f\|_{L_t^1L_x^2},
\end{equation} and this yields \eqref{eq2.38}.
\end{proof}

In an analogous manner, as it has been worked out in \cite{E-KPV1}, the above result holds for a larger class of functions, for example:
$$u\in C([0, 1]; H^{k+3}(e^{\beta x}dx)\cap H^{k+3}(\R))\cap C^1([0, 1]; H^k(e^{\beta x}dx)\cap H^k(\R)),$$
with $k\in \Z$, $k\geq 1$ and for all $\beta >0$.

Now we want to extend the estimates in \eqref{eq2.2} in Lemma \ref{lema-1} to solutions with variable coefficients
\begin{equation}\label{eq2.45}
\pd_tu+i\alpha\pd_x^2u+\beta \pd_x^3u+ a_2(x,t)\pd_x^2u+a_1(x,t)\pd_xu+b_1(x,t)\pd_x\bar{u} +a_0(x,t) u +b_0(x,t)\bar{u} = g.
\end{equation}

Let us introduce the notation
\begin{equation}\label{eq2.46}
H_au := \pd_tu+i\alpha\pd_x^2u+\beta \pd_x^3u+ a_2(x,t)\pd_x^2u+a_1(x,t)\pd_xu+b_1(x,t)\pd_x\bar{u} +a_0(x,t) u +b_0(x,t)\bar{u},
\end{equation}
and suppose that multiplication by $a_0(x,t)$ and $b_0(x,t)$ map
\begin{equation}\label{eq2.48}
L_{xt}^8 \to L_{xt}^{8/7}, \qquad L_{xt}^8 \to L_{x}^{16/15}L_{t}^{16/11}, \qquad L_{xt}^8 \to L_{x}^{1}L_{t}^{2},
\end{equation}
multiplication by $a_1(x,t)$ and $b_1(x,t)$ map
\begin{equation}\label{eq2.49}
L_{x}^{16}L_t^{16/5} \to L_{xt}^{8/7}, \qquad L_{x}^{16}L_t^{16/5} \to L_{x}^{16/15}L_{t}^{16/11}, \qquad L_{x}^{16}L_t^{16/5} \to L_{x}^{1}L_{t}^{2}.
\end{equation}
and multiplication by $a_2(x,t)$ maps
\begin{equation}\label{eq2.499}
L_{x}^{\infty}L_t^{2} \to L_{xt}^{8/7}, \qquad L_{x}^{\infty}L_t^{2} \to L_{x}^{16/15}L_{t}^{16/11}, \qquad L_{x}^{\infty}L_t^{2} \to L_{x}^{1}L_{t}^{2}.
\end{equation}

To guarantee that the coefficients satisfy these conditions, it is enough to consider,
\begin{equation}\label{sp-as}
\begin{split}
&a_0, b_0 \in L_{x}^{16/13}L_t^{16/9}\cap L_{x}^{8/7}L_t^{8/3}\cap L_{x}^{16/15}L_t^{16/3},\\
&a_1, b_1 \in L_{xt}^{4/3}\cap L_{x}^{16/13}L_t^{16/9}\cap L_{x}^{8/7}L_t^{8/3}\\
&a_2 \in L_x^{8/7}L_t^{8/3}\cap L_x^{16/15}L_t^{16/3}\cap L_x^1L_t^{\infty},
\end{split}
\end{equation}

Also, if we assume that, if the coefficients satisfy
\begin{equation}\label{eq.cmx1}
\begin{split}
& a_0, b_0, a_1, b_1, a_2, \pd_x a_0, \pd_x b_0, \pd_x a_1, \pd_x b_1, \pd_x a_2, \pd_x^2 a_2, \pd_x^3 a_2, \pd_t a_2 \in  L^{\infty}(\R\times [0, 1]),\\
&a_2, \pd_t a_2 \in L_t^{\infty}([0, 1]; L_x^1(\R)).
\end{split}
\end{equation}
with small norms in \eqref{sp-as}, then Lemma \ref{lema-1} holds for $H_a$ instead of $H$. In fact we have the following result.

\begin{lemma}\label{lema-3}
Suppose that the coefficients $a_0, b_0, a_1, b_1, a_2$ satisfy \eqref{sp-as} and \eqref{eq.cmx1} with small norms in the spaces in \eqref{sp-as}. There exists $k\in \Z^+ $ such that if $u \in C^{\infty}([0, 1]; C_0^{\infty}(\R))$, then for any $m \geq 10 \|a_2\|_{L^{\infty}(\R\times [0, 1])}$
\begin{equation}\label{eq2.52}
\begin{split}
\|e^{mx}u\|_{L_{xt}^8}&+\|e^{mx}\partial_xu\|_{L_{x}^{16}L_t^{16/5}}+\|e^{mx}\partial_x^2u\|_{L_{x}^{\infty}L_t^{2}}\\
&\leq Cm^{2k}\Big(\|J(e^{mx}u(\cdot, 0))\|_{L^2}+\|J(e^{mx}u(\cdot, 1))\|_{L^2}\Big)\\
&\quad + C \Big(\|e^{mx}H_au\|_{L_{xt}^{8/7}}+\|e^{mx}H_au\|_{L_x^{16/15}L_t^{16/11}}+\|e^{mx}H_au\|_{L_{x}^{1}L_t^2}\Big),
\end{split}
\end{equation}
\end{lemma}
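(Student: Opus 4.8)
The plan is to bootstrap from Lemma~\ref{lema-1}: write \eqref{eq2.45} as $Hu = g - a_2 u_{xx} - a_1 u_x - b_1 \bar u_x - a_0 u - b_0 \bar u$, so that $H_a u = g$ means $Hu = g - (\text{lower-order terms})$. Setting $v = e^{mx}u$ as before, we have $e^{mx}Hu = e^{mx}g - a_2 e^{mx}u_{xx} - a_1 e^{mx}u_x - b_1 e^{mx}\bar u_x - a_0 e^{mx}u - b_0 e^{mx}\bar u$. Here I must be slightly careful: $e^{mx}\partial_x^2 u = (\partial_x - m)^2 v$, etc., but since $(\partial_x-m)^2 v = v_{xx} - 2mv_x + m^2 v$ the extra terms only produce factors of $m$ (bounded powers), which the $Cm^{2k}$ prefactor on the right of \eqref{eq2.52} absorbs. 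Thus, modulo harmless powers of $m$, feeding this into \eqref{eq2.2} gives
\begin{equation*}
\begin{split}
\|e^{mx}u\|_{L_{xt}^8}&+\|e^{mx}\partial_xu\|_{L_{x}^{16}L_t^{16/5}}+\|e^{mx}\partial_x^2u\|_{L_{x}^{\infty}L_t^{2}}
\leq Cm^{2k}\Big(\|J(e^{mx}u(\cdot, 0))\|_{L^2}+\|J(e^{mx}u(\cdot, 1))\|_{L^2}\Big)\\
&\quad + C \Big(\|e^{mx}g\|_{L_{xt}^{8/7}}+\|e^{mx}g\|_{L_x^{16/15}L_t^{16/11}}+\|e^{mx}g\|_{L_{x}^{1}L_t^2}\Big)
+ C\,\mathcal{E}(u),
\end{split}
\end{equation*}
where $\mathcal{E}(u)$ collects the norms of the lower-order terms in the three target spaces $L_{xt}^{8/7}$, $L_x^{16/15}L_t^{16/11}$, $L_x^1L_t^2$.

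The crux is to estimate $\mathcal{E}(u)$ by the left-hand side times a \emph{small} constant, so it can be absorbed. This is exactly what the mapping hypotheses \eqref{eq2.48}, \eqref{eq2.49}, \eqref{eq2.499} are designed for: multiplication by $a_0,b_0$ sends $L_{xt}^8$ into each of the three target spaces, so $\|a_0 e^{mx}u\|$ in each target space is $\lesssim \|a_0\|_{(\cdot)}\,\|e^{mx}u\|_{L_{xt}^8}$; similarly $a_1,b_1$ act on $\|e^{mx}\partial_x u\|_{L_x^{16}L_t^{16/5}}$ and $a_2$ acts on $\|e^{mx}\partial_x^2 u\|_{L_x^\infty L_t^2}$. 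By Hölder, each of these multiplier bounds reduces to a Lebesgue-exponent count verifying that the assumed memberships in \eqref{sp-as} (e.g. $a_1 \in L_{xt}^{4/3}$ with $1/(4/3) = 1/(8/7) - 1/8$) exactly close the exponents; I would just check these three-by-three exponent identities. Since the norms in \eqref{sp-as} are assumed small, $\mathcal{E}(u) \le \tfrac12(\text{LHS})$, and absorbing gives \eqref{eq2.52}. The $m$-factors from rewriting $e^{mx}\partial_x^j u$ in terms of $(\partial_x-m)^j v$ do not spoil smallness because they multiply the already-small coefficient norms, and one can also just allow $k$ to grow to swallow them on the $Jv$ side; the condition $m \ge 10\|a_2\|_{L^\infty}$ ensures the $a_2$-term, which has the delicate $L_x^\infty L_t^2$ slot and whose rewriting costs the largest power $m^2$, stays subordinate.

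I should also confirm that \eqref{eq2.2} is genuinely applicable here. As the remark after Lemma~\ref{lema-1} notes, the estimate extends from $C^\infty([0,1];C_0^\infty(\R))$ to the weighted-Sobolev class, and since $u \in C^\infty([0,1];C_0^\infty(\R))$ in the statement of Lemma~\ref{lema-3} there is no issue; the hypotheses \eqref{eq.cmx1} (boundedness of $a_j$ and their low-order derivatives, and $a_2, \partial_t a_2 \in L_t^\infty L_x^1$) guarantee $H_a u \in C^\infty([0,1];\s(\R))$-type decay so that all the right-hand norms are finite and the manipulation $e^{mx}H_a u = \dots$ is legitimate term by term.

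The step I expect to be the main obstacle is the bookkeeping in the absorption argument: one has to be scrupulous that every power of $m$ produced when converting $e^{mx}\partial_x^j u$ into derivatives of $v$ (and when differentiating products with the $a_j$'s, which is where \eqref{eq.cmx1} enters) is either absorbed into $Cm^{2k}$ on the data side or multiplied by a small coefficient norm on the source side — never left multiplying a full power of the left-hand side. The potentially dangerous term is $a_2 \partial_x^2$ acting into $L_x^\infty L_t^2$, since $(\partial_x-m)^2 v$ carries the factor $m^2$; the lower bound $m \ge 10\|a_2\|_{L^\infty}$ together with the smallness of $\|a_2\|_{L_x^{8/7}L_t^{8/3}}\cap L_x^{16/15}L_t^{16/3}\cap L_x^1 L_t^\infty$ is precisely what keeps this under control, and verifying that interplay carefully is the heart of the proof.
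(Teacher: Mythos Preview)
Your overall strategy—apply Lemma~\ref{lema-1}, write $Hu=H_au-(a_2\partial_x^2u+a_1\partial_xu+b_1\partial_x\bar u+a_0u+b_0\bar u)$, estimate the lower-order terms in the target spaces via H\"older and the smallness of the coefficient norms in \eqref{sp-as}, and absorb—is exactly the paper's proof.

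However, you manufacture a difficulty that is not there and then give a confused resolution of it. The left side of \eqref{eq2.2} already controls $\|e^{mx}\partial_x^j u\|$ in the relevant spaces \emph{directly}; there is no need to rewrite $e^{mx}\partial_x^j u$ as $(\partial_x-m)^j v$. Consequently, when you bound $\|a_j\,e^{mx}\partial_x^j u\|$ in each target space by (small coefficient norm)$\times\|e^{mx}\partial_x^j u\|$, no powers of $m$ ever appear, and the absorption constant is simply the (small) sum of the coefficient norms in \eqref{sp-as}. Your proposed fixes would in fact fail if the $m$-factors were genuine: a factor $m^2$ times a fixed small coefficient norm is not small for large $m$, and the hypothesis $m\ge 10\|a_2\|_{L^\infty}$ is a \emph{lower} bound on $m$, so it cannot help suppress such a factor. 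Drop the $v$-conversion discussion and the argument is clean and matches the paper's three-line absorption.
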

\begin{proof} Let us define
$$\tres f\tres_1 := \|e^{mx}f\|_{L_{xt}^8}+\|e^{mx}\partial_xf\|_{L_{x}^{16}L_t^{16/5}}+\|e^{mx}\partial_x^2f\|_{L_{x}^{\infty}L_t^{2}} $$

$$\tres f\tres_2 := \|f\|_{L_{xt}^{8/7}}+\|f\|_{L_x^{16/15}L_t^{16/11}}+\|f\|_{L_{x}^{1}L_t^2} $$

From Lemma \ref{lema-1} we have
\begin{equation}\label{eq2.612}
\begin{split}
\tres u\tres_1&\leq Cm^{2k}\Big(\|J(e^{mx}u(\cdot, 0))\|_{L^2}+\|J(e^{mx}u(\cdot, 1))\|_{L^2}\Big)
 + C \tres e^{mx}Hu\tres_2\\
 &\leq Cm^{2k}\Big(\|J(e^{mx}u(\cdot, 0))\|_{L^2}+\|J(e^{mx}u(\cdot, 1))\|_{L^2}\Big)
 + C \tres e^{mx}H_au\tres_2 \\
 &\quad +\tres e^{mx}(a_1\pd_xu+a_2\pd_x\bar{u}+a_3u+a_4\bar{u})\tres_2\\
 &\leq Cm^{2k}\Big(\|J(e^{mx}u(\cdot, 0))\|_{L^2}+\|J(e^{mx}u(\cdot, 1))\|_{L^2}\Big)
 + C \tres e^{mx}H_au\tres_2 +\frac12\tres u\tres_1,
\end{split}
\end{equation}
which gives the desired result.
\end{proof}

One can extend this result to a boarder class of solutions as in \cite{E-KPV1}.

\begin{theorem}\label{theo-1}
Let the coefficients $a_0, b_0, a_1, b_1, a_2$  satisfy the conditions in \eqref{sp-as} and \eqref{eq.cmx1}. If $u = u(x,t)$ is a solution of 
\begin{equation}\label{eq2.45-1}
\pd_tu+i\alpha\pd_x^2u+\beta \pd_x^3u+ a_2(x,t)\pd_x^2u+a_1(x,t)\pd_xu+b_1(x,t)\pd_x\bar{u} +a_0(x,t) u +b_0(x,t)\bar{u} = 0,
\end{equation}
 with $u\in C([0, 1];H^1(\R))$ satisfying that
$$u(\cdot, 0),\; u(\cdot, 1) \in H^1(e^{ax^{l}_+})$$
for some $l >1$ and $a>0$, then there exist $c_0$ and $R_0>0$ sufficiently large such that for $R\geq R_0$
\begin{equation}\label{eqth.1}
\|u\|_{L^2(\{R<x<R+1\}\times (0,1))} +\|\pd_xu\|_{L^2(\{R<x<R+1\}\times (0,1))}+\|\pd_x^2u\|_{L^2(\{R<x<R+1\}\times (0,1))}\leq c_0e^{-aR^{l}/4^{l}}.
\end{equation}
\end{theorem}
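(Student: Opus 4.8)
The plan is to deduce the localized decay estimate \eqref{eqth.1} from the weighted a priori inequality \eqref{eq2.52} of Lemma \ref{lema-3} by a truncation/approximation argument, exactly in the spirit of the passage from Lemma 1 to its corollary in \cite{E-KPV1}. First I would note that Lemma \ref{lema-3} is stated only for $u\in C^\infty([0,1];C_0^\infty(\R))$, so the initial step is to upgrade it to the class of solutions in the hypothesis, namely $u\in C([0,1];H^1(\R))$ with $u(\cdot,0),u(\cdot,1)\in H^1(e^{ax_+^l}dx)$. This is done by smoothing in $x$ and $t$ (mollification), spatially truncating with a cutoff that is $1$ on a large ball, and checking that all terms in \eqref{eq2.45-1} — including the variable-coefficient terms, which are controlled by the $L^\infty$ bounds on $a_j,b_j$ and their derivatives in \eqref{eq.cmx1} — converge in the norms appearing on the right-hand side of \eqref{eq2.52}. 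Since the coefficients also satisfy the smallness in \eqref{sp-as}, Lemma \ref{lema-3} applies and passes to the limit; the outcome is that \eqref{eq2.52} holds with $H_au=0$, i.e.
\begin{equation*}
\tres u\tres_1 \le Cm^{2k}\Big(\|J(e^{mx}u(\cdot,0))\|_{L^2}+\|J(e^{mx}u(\cdot,1))\|_{L^2}\Big),
\end{equation*}
for every $m\ge 10\|a_2\|_{L^\infty(\R\times[0,1])}$, where $\tres\cdot\tres_1$ is the left-hand side of \eqref{eq2.52}.

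Next I would insert a spatial cutoff to localize to $\{R<x<R+1\}$. Pick $\varphi\in C^\infty(\R)$ with $\varphi\equiv1$ on $[R,R+1]$ and $\varphi\equiv0$ for $x\le R-1$ and $x\ge R+2$, and apply the estimate above to $\varphi u$ (which is now genuinely compactly supported in $x$, so no decay at $+\infty$ is needed beyond what the cutoff provides). The commutator $H_a(\varphi u)-\varphi H_au$ involves only $\varphi',\varphi'',\varphi'''$ times lower-order derivatives of $u$, all supported in the two unit strips adjacent to $\{R<x<R+1\}$; using $\tres\cdot\tres_2$-type bounds and the a priori control of $u$ in $C([0,1];H^1)$ one absorbs these into the right-hand side, at the cost of a factor $e^{m(R+2)}$ coming from $e^{mx}$ on the support of $\varphi$. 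On the left I would keep only the contribution from $\{R<x<R+1\}$, where $e^{mx}\ge e^{mR}$, obtaining
\begin{equation*}
e^{mR}\Big(\|u\|_{L^2(\{R<x<R+1\}\times(0,1))}+\|\pd_xu\|_{L^2(\cdots)}+\|\pd_x^2u\|_{L^2(\cdots)}\Big)\le Cm^{2k}e^{Cm}\Big(\|e^{mx}u(\cdot,0)\|_{H^1}+\|e^{mx}u(\cdot,1)\|_{H^1}\Big).
\end{equation*}
Here the $H^1(e^{ax_+^l}dx)$ hypothesis on the data at $t=0,1$ bounds $\|e^{mx}u(\cdot,j)\|_{H^1}$ by a constant times $\sup_{x>0}e^{mx-ax_+^l}$ times the weighted norm of the data, and since $l>1$ this supremum is $e^{c_l m^{l/(l-1)}}$ for an explicit $c_l$.

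Finally I would optimize in $m$. Combining the two bounds gives, for the quantity $D(R)$ on the left of \eqref{eqth.1},
\begin{equation*}
D(R)\le C\,m^{2k}\exp\big(-mR+Cm+c_l m^{l/(l-1)}\big)\|\,\text{data}\,\|.
\end{equation*}
For $R$ large, choosing $m$ so that $m^{l/(l-1)}\sim R/(2c_l)$, i.e. $m\sim (R/(2c_l))^{(l-1)/l}$, makes the exponent $\le -mR/2\le -\tfrac12(R/(2c_l))^{(l-1)/l}R$; one checks this is $\le -aR^l/4^l$ once $R\ge R_0$ (adjusting constants, and recalling the data weight exponent is $a$ so $c_l$ is proportional to $1/a^{1/(l-1)}$, which is exactly what produces the $a/4^l$ in the exponent), and the polynomial prefactor $m^{2k}$ is harmless. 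This yields \eqref{eqth.1}. The main obstacle I anticipate is the first step — rigorously justifying the extension of Lemma \ref{lema-3} to the rough class $C([0,1];H^1)$, since one must verify that every variable-coefficient term survives the mollification and truncation limits in the somewhat delicate mixed-norm spaces $\tres\cdot\tres_2$; the optimization in $m$ and the commutator bookkeeping are routine by comparison, and the argument parallels \cite{E-KPV1} closely.
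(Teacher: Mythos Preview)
Your cutoff is placed at the wrong scale, and this is a genuine gap, not a technicality. With $\varphi$ supported in $[R-1,R+2]$, the commutator $H_a(\varphi u)-\varphi H_au$ lives on $[R-1,R]\cup[R+1,R+2]$, exactly where the weight $e^{mx}$ has the \emph{same} size $e^{mR}$ as on the target strip $[R,R+1]$. The commutator contains $\varphi' u_{xx}$ (from $\partial_x^3$), and at intermediate times $t\in(0,1)$ you only have $u\in C([0,1];H^1)$, so these terms are at best $O(1)$, not decaying. After weighting, the commutator contributes $Ce^{m(R+2)}$ on the right, while the left is $e^{mR}D(R)$; dividing, you are left with an additive $Ce^{2m}$ that neither decays in $R$ nor can be absorbed into the data term. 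Your displayed inequality $D(R)\le Cm^{2k}\exp(-mR+Cm+c_lm^{l/(l-1)})$ simply drops this term. Moreover, your step~1 cannot be carried out for the full $u$ either: Lemma~\ref{lema-3} requires \emph{small} norms of the coefficients in \eqref{sp-as}, which is not assumed globally but only becomes true on $(R,\infty)$, and the left side $\tres u\tres_1$ is not known a priori to be finite for any $m>0$ since no decay of $u(\cdot,t)$ is given for $0<t<1$.

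The paper's proof fixes all of this with a single change of cutoff. Take $\mu_R(x)=\mu(x/R)$ with $\mu=0$ on $(-\infty,1)$ and $\mu=1$ on $(2,\infty)$, set $u_R=\mu_R u$, and choose $m=\tfrac{a}{2}R^{l-1}$. Then (i) the effective coefficients $\tilde\mu_R a_j,\tilde\mu_R b_j$ are supported in $x>R$, hence small, so Lemma~\ref{lema-3} applies; (ii) the commutator $F_R$ is supported in $[R,2R]$, where $e^{mx}\le e^{2mR}=e^{aR^l}$; (iii) the data $u_R(\cdot,j)$ is supported in $x\ge R$, where $mx=\tfrac{a}{2}R^{l-1}x\le \tfrac{a}{2}x^l$, so $\|J(e^{mx}u_R(\cdot,j))\|_{L^2}\le c_{a,l}$ by the hypothesis; and (iv) the estimate is read off on $[4R,4R+1]$, where $e^{mx}\ge e^{4mR}=e^{2aR^l}$. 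The separation between the commutator region and the output region produces a net factor $e^{-aR^l}$, and renaming $4R\mapsto R$ gives \eqref{eqth.1}. Note also that the correct relation is $m\sim R^{l-1}$, not your $m\sim R^{(l-1)/l}$; the latter would only yield decay $e^{-cR^{(2l-1)/l}}$, strictly weaker than $e^{-cR^l}$.
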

\begin{proof}
Choose $R$ so large that  in the $x$-interval $(R, \infty)$, the coefficients $a_0, b_0, a_1, b_1, a_2$ satisfy the conditions in \eqref{sp-as} and \eqref{eq.cmx1} with small norm in corresponding spaces in \eqref{sp-as}.

Let $\mu \in C^{\infty}(\R)$ with $\mu(x) = 0$ if $x< 1$ and $\mu(x) = 1$ in $x>2$.

For $\mu_{R}(x) = \mu(x/R)$, define
$$u_{R}(x,t) =\mu_{R}(x)u(x,t),$$
so that $u_{R}(x,t)$ satisfies the equation
\begin{equation}\label{eqx.1}
\pd_tu_R+\beta \pd_x^3u_R+i\alpha\pd_x^2u_R+a_2(x,t)\pd_x^2u_R+a_1(x,t)\pd_xu_R+b_1(x,t)\pd_x\bar{u_R} +a_0(x,t) u_R +b_0(x,t)\bar{u_R}= F_R
\end{equation}
where,
\begin{equation}\label{Fr}
\begin{split}
F_R &= \beta \frac1{R^3}\mu_R'''u +3\beta \frac1{R^2}\mu_R''\pd_xu+3\beta\frac1{R} \mu_R'\pd_x^2u +i\alpha \frac1{R^2}\mu_R''u +2i\alpha\frac1{R} \mu_R'\pd_xu\\
&\qquad +a_2(x,t)\frac1{R^2}\mu_R''u +2a_2(x,t)\frac1R\mu_R'\pd_x u+a_1(x,t)\frac1R\mu_R' u +b_1(x,t) \frac1R\mu_R'\bar{u}.
\end{split}
\end{equation}

Note that, ${\textrm{supp}}\; F_R \subset \{x: R< x < 2R\}$. Let us choose, $m = \frac a2 R^{l-1}$. Now, we cam use Lemma \ref{lema-3} to $u_R$ with
$$H_{a\tilde{\mu}_R} =\pd_t+\beta \pd_x^3+i\alpha\pd_x^2+\tilde{\mu}_Ra_2(x,t)\pd_x^2+\tilde{\mu}_Ra_1(x,t)\pd_x+\tilde{\mu}_Rb_1(x,t)\pd_x +\tilde{\mu}_Ra_0(x,t)  +\tilde{\mu}_Rb_0(x,t),$$
where $\tilde{\mu}_R(x)\mu_R(x)=\mu_R(x)$, which assures that the coefficients $\tilde{\mu}_R(x)a_j(x,t)$, $j= 0, 1,2$ and $\tilde{\mu}_R(x)b_j(x,t)$, $j= 0, 1$ have small norms in the corresponding spaces in \eqref{sp-as} for $R>R_0$. Therefore, applying \eqref{eq2.52} for $R$ large, we get
\begin{equation}\label{eq2.60}
\tres u_R\tres_1 \leq cm^{2k}\Big(\|J(e^{mx}u_R(\cdot, 0))\|_{L^2}+\|J(e^{mx}u_R(\cdot, 1))\|_{L^2}\Big)
+ \tres e^{mx}F_R\tres_ 2.
\end{equation}

With the argument similar to the one in \cite{E-KPV1}, the first two terms in the right hand side of \eqref{eq2.60} are bounded by $c_{a, l}$.

Now we move to bound the last term in \eqref{eq2.60}.

Recall that ${\textrm{supp}}\; F_R \subset \{x: R< x < 2R\}$. Now, the combination of H\"older and Minkowski´s integral inequality yield,
\begin{equation}\label{eq2.61}
\begin{split}
\tres e^{mx}F_R\tres_2& = \|e^{mx}F_R\|_{L_{xt}^{8/7}}+\|e^{mx}F_R\|_{L_x^{16/15}L_t^{16/11}}+\|e^{mx}F_R\|_{L_{x}^{1}L_t^2}\\
&\leq ce^{aR^{l-1}R}\|(|u|+|\pd_xu|+|\pd_x^2|)\chi_{\{x:R<x<2R\}}\|_{L_t^{\infty}L_x^2}\\
&\leq c'e^{aR^{l-1}R}.
\end{split}
\end{equation}

Hence, from \eqref{eq2.60} we obtain,
\begin{equation}\label{eq2.62}
\begin{split}
\|e^{mx}u\|_{L^8_{\{x>4R\}}L_t^8}+\|e^{mx}\partial_xu\|_{L_{{\{x>4R\}}}^{16}L_t^{16/5}}+\|e^{mx}\partial_x^2u\|_{L_{{\{x>4R\}}}^{\infty}L_t^{2}}\leq c_{a,l} +c'e^{aR^{l}}.
\end{split}
\end{equation}

Once again, using H\"older inequality in \eqref{eq2.62} we get, for sufficiently large $R$
\begin{equation}\label{eq2.63}
\begin{split}
\|u\|_{L^2_{(\{4R<x<4R+1\}\times (0,1))}}+\|\pd_xu\|_{L^2_{(\{4R<x<4R+1\}\times (0,1))}}+\|\pd_x^2u\|_{L^2_{(\{4R<x<4R+1\}\times (0,1))}}\leq c_{a,l}e^{-aR^{l}}.
\end{split}
\end{equation}

Replacing $4R$ by $R'$ we obtain,
\begin{equation}\label{eq2.64}
\begin{split}
\|u\|_{L^2_{(\{R'<x<R'+1\}\times (0,1))}}+\|\pd_xu\|_{L^2_{(\{R'<x<R'+1\}\times (0,1))}}+\|\pd_x^2u\|_{L^2_{(\{R'<x<R'+1\}\times (0,1))}}\leq c_{a,l}e^{-a(R'/4)^{l}},
\end{split}
\end{equation}
which yields the required estimate \eqref{eqth.1}.
\end{proof}

\secao{Lower  estimates}\label{prel}

This section is concerned with lower estimates that play fundamental role in the proof of the main result of this work. Let us begin with following lemma.
\begin{lemma}\label{mx1}
Assume that $\varphi$ is a real smooth function with compact support in $[0,1]$ and $\beta \neq 0$. Then, there exist $c>0$ and $M=M(\|\varphi'\|_{\infty};\|\varphi''\|_{\infty})>0$ such that the inequality
\begin{equation}\label{mx2}
\begin{split}
\dfrac{a^{5/2}}{R^3}&\left\|e^{a(\frac{x}{R}+\varphi(t))^2 }\left(\frac{x}{R}+\varphi(t)\right)^2 g\right\|_{L^2_{xt}}+\dfrac{a^{3/2}}{R^2}\left\|e^{a (\frac{x}{R}+\varphi(t))^2 }\left(\frac{x}{R}+\varphi(t)\right)^2 \partial_xg\right\|_{L^2_{xt}}\\
&+\dfrac{a^{1/2}}{R}\left\|e^{a(\frac{x}{R}+\varphi(t))^2 } \partial_x^2g\right\|_{L^2_{xt}}\le c(\beta) \left\|e^{a (\frac{x}{R}+\varphi(t))^2 }(\partial_t+i\alpha \partial_x^2+\beta \partial_x^3)g\right\|_{L^2_{xt}}
\end{split}
\end{equation}
holds, for $R \ge \alpha^2/\beta^{1/3}$, $a$ such that $a^2\ge MR^3$, and $g \in C_0^\infty(\R^2)$ supported in
\begin{equation*}
\left\{(x,t) \in \R^2: \left|\frac{x}{R}+\varphi(t)\right|\ge 1\right\}.
\end{equation*}
\end{lemma}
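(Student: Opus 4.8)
The plan is to prove the weighted inequality \eqref{mx2} by conjugating the operator $\partial_t + i\alpha\partial_x^2 + \beta\partial_x^3$ with the weight $e^{a(\frac{x}{R}+\varphi(t))^2}$ and carefully splitting the resulting operator into a symmetric and an antisymmetric part, following the Carleman-type scheme of \cite{E-KPV1}. Concretely, I would set $\phi(x,t) := a(\frac{x}{R}+\varphi(t))^2$, write $f := e^{\phi} g$, and compute $e^{\phi}(\partial_t + i\alpha\partial_x^2 + \beta\partial_x^3)(e^{-\phi} f)$. Because $g$ is supported where $|\frac{x}{R}+\varphi(t)| \ge 1$, all negative powers of $(\frac{x}{R}+\varphi(t))$ that appear after differentiating $\phi$ are harmless — this is exactly what the support hypothesis buys us. Expanding, with $\phi_x = \frac{2a}{R}(\frac{x}{R}+\varphi)$, $\phi_{xx} = \frac{2a}{R^2}$, $\phi_{xxx}=0$, $\phi_t = 2a\varphi'(\frac{x}{R}+\varphi)$, the conjugated operator has the schematic form $\partial_t + i\alpha\partial_x^2 + \beta\partial_x^3 + (\text{lower order in }\partial_x\text{ with coefficients polynomial in }\phi_x)$, and I would organize it as $\mathcal{S} + \mathcal{A} + \mathcal{E}$ where $\mathcal{S}$ is (formally) skew-adjoint in $L^2_{xt}$, $\mathcal{A}$ is self-adjoint, and $\mathcal{E}$ is an error term whose $L^2$ norm is controlled by the three terms on the left side of \eqref{mx2}.

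The core is the standard identity: for $f \in C_0^\infty$,
\begin{equation*}
\|(\mathcal{S}+\mathcal{A})f\|_{L^2}^2 = \|\mathcal{S}f\|_{L^2}^2 + \|\mathcal{A}f\|_{L^2}^2 + \langle [\mathcal{S},\mathcal{A}]f, f\rangle,
\end{equation*}
so that a lower bound on the commutator $[\mathcal{S},\mathcal{A}]$ yields a lower bound on $\|(\mathcal{S}+\mathcal{A})f\|$. The antisymmetric (dispersive) part carries the $\beta\partial_x^3$ term together with the $\phi_x^2$-type contributions, while the symmetric part carries $i\alpha\partial_x^2$ and the first-order terms with coefficient $\phi_{xx}$, $\phi_x$; the time derivative $\partial_t$ splits as $\partial_t = \frac12(\partial_t + \partial_t^*) + \frac12(\partial_t - \partial_t^*)$ but $\partial_t^* = -\partial_t$ so it sits in the skew part, contributing $\phi_t$ to the symmetric part. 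I would then compute $[\mathcal{S},\mathcal{A}]$ and show that, after discarding terms of favorable sign and using $a^2 \ge MR^3$ with $M$ depending on $\|\varphi'\|_\infty, \|\varphi''\|_\infty$ to absorb the contributions of $\varphi', \varphi''$, one gets
\begin{equation*}
\langle [\mathcal{S},\mathcal{A}]f,f\rangle \gtrsim_\beta \frac{a^5}{R^6}\Big\|\Big(\tfrac{x}{R}+\varphi\Big)^2 f\Big\|^2 + \frac{a^3}{R^4}\Big\|\Big(\tfrac{x}{R}+\varphi\Big)\partial_x f\Big\|^2 + \frac{a}{R^2}\|\partial_x^2 f\|^2,
\end{equation*}
which upon translating back to $g = e^{-\phi}f$ is precisely the square of the left-hand side of \eqref{mx2}. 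The condition $R \ge \alpha^2/\beta^{1/3}$ enters here: it guarantees the $\beta$-generated positive terms in the commutator dominate the indefinite $\alpha$-generated cross terms (the same mechanism as "$\beta\ne 0$ is essential" flagged in the introduction — the third-order term provides the coercivity that the Schrödinger term alone cannot).

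The main obstacle, and the place where this differs genuinely from \cite{E-KPV1}, is the Schrödinger term $i\alpha\partial_x^2$. In the pure gKdV Carleman estimate the symmetric part is essentially just $\phi_t + (\text{zeroth order})$ and the commutator computation is clean; here $i\alpha\partial_x^2$ is a second-order \emph{symmetric} operator (since $i\partial_x^2$ is skew but multiplied by the real constant $\alpha$... more precisely $\overline{(i\alpha\partial_x^2 f)} = -i\alpha\partial_x^2\bar f$, so one must be careful whether "adjoint" means the complex $L^2$ adjoint), and its commutator with the skew part $\beta\partial_x^3 + \frac{x}{R}+\varphi$-weights produces genuinely new third-order and mixed terms that are not present in the KdV case. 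The strategy to handle this is to keep $\alpha$-terms grouped so that the worst one, of order $\frac{\alpha a^2}{R^3}\|\partial_x f\|\cdot\|(\frac xR+\varphi)f\|$ or similar, is dominated via Cauchy–Schwarz by the positive $\frac{a^5}{R^6}$ and $\frac{a}{R^2}$ terms once $R \gtrsim \alpha^2$ and $a^2 \gtrsim R^3$; this is the quantitative bookkeeping that forces the specific thresholds in the hypothesis. Once the commutator lower bound is established, \eqref{mx2} follows by Cauchy–Schwarz ($\|(\mathcal S+\mathcal A)f\| \le \|e^\phi H g\| + \|\mathcal E f\|$ and absorbing $\|\mathcal E f\|$ into the left side for $M$ large), completing the argument.
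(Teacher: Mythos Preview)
Your approach is essentially the paper's: conjugate by $e^{a\theta}$ with $\theta=(x/R+\varphi)^2$, split into symmetric and antisymmetric parts, invoke \cite{E-KPV1} for the pure KdV commutator lower bound, and then show the extra $\alpha$-dependent cross terms are absorbed using $R\ge\alpha^2$ and $a^2\ge MR^3$ via Cauchy--Schwarz.

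Two clarifications worth making before you compute. First, in the complex $L^2$ inner product the operator $i\alpha\partial_x^2$ is \emph{skew}-adjoint, not symmetric; in the paper's notation the conjugated operator splits exactly as $(S_a+S_{a,\alpha})+(A_a+A_{a,\alpha})$ with $i\alpha f_{xx}$ sitting in the antisymmetric piece $A_{a,\alpha}$, and there is no residual error term $\mathcal{E}$ to absorb. Second, the paper organizes the commutator bookkeeping by separating the four cross-commutators $[S_a,A_a]$, $[S_a,A_{a,\alpha}]$, $[S_{a,\alpha},A_a]$, $[S_{a,\alpha},A_{a,\alpha}]$: the first is exactly the KdV estimate from \cite{E-KPV1}, the middle two coincide and produce the genuinely new third-order and mixed terms you anticipate, and the last is a harmless $-4a\alpha^2\theta_{xx}f_{xx}$. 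Once you place $i\alpha\partial_x^2$ correctly, the five resulting $\alpha$-terms are each dominated by the KdV commutator under the stated thresholds, and the argument closes exactly as you outline.
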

\begin{proof}
Initially we consider the case when $\beta=1$. In a similar way as in \cite{E-KPV1}, we define a function $f(x,t)=e^{a \theta(x,t)}g(x,t)$ with  $\theta(x,t)=(\frac{x}{R}+\varphi(t))^2$ and the expression
\begin{equation}\label{mx3}
e^{a \theta(x,t)}(\partial_t+i\alpha \partial_x^2+\partial_x^3)\left(e^{-a \theta(x,t)}f(x,t)\right)=(S_a+S_{a, \alpha})f+(A_a+A_{a, \alpha})f,
\end{equation}
where
\begin{equation*}
S_af= -3a (\theta_xf_x)_x-a^3 \theta_x^3f-a \theta_{xxx} f-a \theta_t f ; \quad S_{a, \alpha}f=-i \alpha a \theta_{xx} f-2i \alpha a \theta_x f_x,
\end{equation*}
and
\begin{equation*}
A_a f= f_t+f_{xxx}+3 a^2 \theta_x^2 f_x+3 a^2 \theta_x \theta_{xx}f; \quad A_{a, \alpha}f=i \alpha a^2 \theta_x^2 f +i \alpha f_{xx}.
\end{equation*}

We have $S_a^*=S_a$, $S_{a, \alpha}^*=S_{a, \alpha}$, $A_a^*=-A_a $ and $A_{a, \alpha}^*=-A_{a, \alpha}$, and therefore,
\begin{equation}\label{mx4}
\begin{split}
&\left\|e^{a (\frac{x}{R}+\varphi(t))^2 }(\partial_t+i\alpha \partial_x^2+\partial_x^3)g\right\|_{L^2_{xt}}^2= \left\|(S_a+S_{a, \alpha})f+(A_a+A_{a, \alpha})f \right\|_{L^2_{xt}}^2 \\
& \ge \left\langle \left\{(S_a A_a- A_a S_a)+(S_a A_{a, \alpha}- A_{a, \alpha}S_a)
+( S_{a, \alpha} A_a- A_a S_{a, \alpha})+( S_{a, \alpha} A_{a, \alpha}- A_{a, \alpha} S_{a, \alpha})\right\}f,f\right\rangle.
\end{split}
\end{equation}

We find that
\begin{equation}\label{mx5}
\begin{split}
&(S_a A_a- A_a S_a)f= \,\,[S_a;A_a]f \\
&=9a(\theta_{xx}f_{xx})_{xx}+((6a\theta_{xt}-18a^3\theta_x^2\theta_{xx})f_x)_x+(-3a^3\theta_{xx}^3+a \theta_{tt}+6a^3\theta_{x}^2\theta_{xt}+9a^5\theta_{x}^4\theta_{xx})f,
\end{split}
\end{equation}
\begin{equation}\label{mx6}
[S_a;A_{a, \alpha}]f= [S_{a, \alpha};A_{a}]f
= \,\,i6a\alpha\theta_{xx}f_{xxx}-i6a^3\alpha\theta_x\theta_{xx}^2f-i6a^3\alpha\theta_x^2\theta_{xx}f_x+i2a\alpha\theta_{xt}f_x,
\end{equation}
and
\begin{align}\label{mx7}
[S_{a, \alpha};A_{a, \alpha}]f
= \,\,-4a\alpha^2\theta_{xx}f_{xx}.
\end{align}

In \cite{E-KPV1} it was proved that, if $a^2 \ge ( \|\varphi'\|_{\infty}+\|\varphi''\|_{\infty}^{1/2}+1)R^3$, then
\begin{equation}\label{mx8}
\begin{split}
\left\langle [S_a; A_a]f,f\right\rangle &\ge \frac{18 a}{R^2}\int\!\!\!\int \left|f_{xx}\right|^2dxdt+\frac{132 a^3}{R^4}\int\int \left(\frac{x}{R}+\varphi(t)\right)^2\left|f_{x}\right|^2dxdt\\
& \qquad+\frac{216 a^5}{R^6}\int\int \left(\frac{x}{R}+\varphi(t)\right)^4\left|f\right|^2dxdt.
\end{split}
\end{equation}

From \eqref{mx6} and \eqref{mx8} one has that
\begin{equation}\label{mx9}
\begin{split}
&\left\langle [S_a;A_{a, \alpha}]f+ [S_{a, \alpha};A_{a}]f+[S_{a, \alpha};A_{a, \alpha}]f,f\right\rangle \\ &\qquad\quad=i12a\alpha\int\!\!\!\int\theta_{xx}f_{xxx}\bar{f}dxdt -i12a^3\alpha\int\!\!\!\int\theta_x\theta_{xx}^2|f|^2dxdt \\
&\qquad\quad\quad -i12a^3\alpha\int\!\!\!\int\theta_x^2\theta_{xx}f_x\bar{f}dxdt +i4a\alpha\int\!\!\!\int\theta_{xt}f_x\bar{f}dxdt +4a\alpha^2\int\!\!\!\int\theta_{xx}|f_{x}|^2dxdt\\
&\qquad\quad=:iJ_1+iJ_2+iJ_3+iJ_4+J_5.
\end{split}
\end{equation}

Integrating by parts, we observe that $iJ_1, iJ_2+iJ_3, i J_4 \in \R$. Since  $\theta(x,t)=(\frac{x}{R}+\varphi(t))^2$ we have that
\begin{equation}\label{mx101}
|iJ_1| \le \dfrac{24a |\alpha|}{R^2}\left|\int\!\!\!\int f_{xx}\bar{f_{x}}dxdt\right|\le \dfrac{12a }{R^2}\int\!\!\!\int |f_{xx}|^2dxdt+\dfrac{12\alpha^2 a}{R^2}\int\!\!\!\int|f_{x}|^2 dxdt.
\end{equation}

Similarly
\begin{equation}\label{mx10}
\begin{split}
|iJ_2+iJ_3|& =|\Im J_3| \le |J_3| \le 96\int\!\!\!\int\left(\dfrac{a^{5/2}(\frac{x}{R}+\varphi(t))^2|f|}{R^3}\right)\left( \dfrac{|\alpha|a^{1/2}|\bar{f}_{x}|}{R}\right) dxdt\\
&\le \frac{48 a^5}{R^6}\int\int \left(\frac{x}{R}+\varphi(t)\right)^4\left|f\right|^2dxdt+\dfrac{48\alpha^2 a}{R^2}\int\!\!\!\int|f_{x}|^2 dxdt.
\end{split}
\end{equation}

As $a^2 \ge \left\|\varphi'\right\|_{L^{\infty}}R^3$ and $(\frac{x}{R}+\varphi(t))^4>1$, on the support of $f$, we get
\begin{equation}\label{mx11}
\begin{split}
|iJ_4| & \le 8\int\!\!\!\int \left(a^{1/2}|\varphi' f|\right)\left(\dfrac{a^{1/2} |\alpha| |\bar{f_{x}|}}{R}\right)dxdt\\
&\le 4a \int\!\!\!\int |\varphi' f|^2dxdt+\dfrac{4\alpha^2 a}{R^2}\int\!\!\!\int|f_{x}|^2 dxdt\\
&\le  \frac{4 a^5}{R^6}\int\int \left(\frac{x}{R}+\varphi(t)\right)^4\left|f\right|^2dxdt+\dfrac{4\alpha^2 a}{R^2}\int\!\!\!\int|f_{x}|^2 dxdt.
\end{split}
\end{equation}

Now $R \ge \alpha^2$ and $a^2 \ge R^3$ imply that
\begin{equation}\label{mx12}
\dfrac{\alpha^2 a}{R^2}\int\!\!\!\int|f_{x}|^2 dxdt \le \frac{a^3}{R^4}\int\int \left(\frac{x}{R}+\varphi(t)\right)^2\left|f_{x}\right|^2dxdt.
\end{equation}

Combining \eqref{mx101}-\eqref{mx12} we obtain
\begin{equation*}
\begin{split}
iJ_1+iJ_2+iJ_3+iJ_4+J_5 & \ge \frac{-12 a}{R^2}\int\!\!\!\int \left|f_{xx}\right|^2dxdt-\frac{64 a^3}{R^4}\int\int \left(\frac{x}{R}+\varphi(t)\right)^2\left|f_{x}\right|^2dxdt\\
&\qquad -\frac{52 a^5}{R^6}\int\int \left(\frac{x}{R}+\varphi(t)\right)^4\left|f\right|^2dxdt.
\end{split}
\end{equation*}
This inequality, \eqref{mx4} and \eqref{mx9} yield
\begin{equation*}
\begin{split}
\left\|e^{a (\frac{x}{R}+\varphi(t))^2 }(\partial_t+i\alpha \partial_x^2+\partial_x^3)g\right\|_{L^2_{xt}}^2 &\ge \left\langle [S_a;A_{a}]f+ [S_a;A_{a, \alpha}]f+ [S_{a, \alpha};A_{a}]f+[S_{a, \alpha};A_{a, \alpha}]f,f\right\rangle\\
& \ge  \frac{6 a}{R^2}\int\!\!\!\int \left|f_{xx}\right|^2dxdt+\frac{68 a^3}{R^4}\int\int \left(\frac{x}{R}+\varphi(t)\right)^2\left|f_{x}\right|^2dxdt\\
&\qquad +\frac{164 a^5}{R^6}\int\int \left(\frac{x}{R}+\varphi(t)\right)^4\left|f\right|^2dxdt,
\end{split}
\end{equation*}
which concludes the proof of the Lemma when $\beta=1$.

Now if $\beta>0$, $\beta \neq 1$ (see Remark \ref{obspat}) we use the case $\beta=1$ with $\alpha:= \alpha / \beta^{2/3}$ and $g(x,t)=g(\beta^{1/3}x,t)$. Finally, we perform a change of variable $x:=\beta^{1/3}x$ to obtain \eqref{mx2}.

\end{proof}
In an analogous manner as in \cite{E-KPV1}, we have the following result.
\begin{lemma}\label{lemmx1}
Assume that $\varphi$ is a real smooth function with compact support in $[0,1]$ and that $a_0, a_1, b_0, b_1$ are complex functions in $L^{\infty}(\R^2)$.
Then there exist $c>0$, $$R_0=R_0(\|\varphi'\|_{\infty};\|\varphi''\|_{\infty};\|a_0\|_{\infty};\|a_1\|_{\infty})>1, \,\,\textrm{and}\,\, M=M(\|\varphi'\|_{\infty};\|\varphi''\|_{\infty})>0$$ such that the inequality
\begin{align}\label{lemmx2}
&\dfrac{a^{5/2}}{R^3}\left\|e^{a(\frac{x}{R}+\varphi(t))^2 }\left(\frac{x}{R}+\varphi(t)\right)^2 g\right\|_{L^2_{xt}}+\dfrac{a^{3/2}}{R^2}\left\|e^{a (\frac{x}{R}+\varphi(t))^2 }\left(\frac{x}{R}+\varphi(t)\right)^2 \partial_xg\right\|_{L^2_{xt}}\nonumber\\
&\le c \left\|e^{a (\frac{x}{R}+\varphi(t))^2 }(g_t+i\alpha g_{xx}+\beta g_{xxx}+a_1(x,t)g_x+b_1(x,t)\bar{g}_x+a_0(x,t)g+b_0(x,t)\bar{g})\right\|_{L^2_{xt}}
\end{align}
holds, for $R \ge R_0+ \alpha^2$, $a$ such that $a\ge MR^{3/2}$, and $g \in C_0^\infty(\R^2)$ supported in
$$
\left\{(x,t) \in \R^2: \left|\frac{x}{R}+\varphi(t)\right|\ge 1\right\}.
$$
\end{lemma}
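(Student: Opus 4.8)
The plan is to perturb the estimate of Lemma \ref{mx1} by absorbing the lower-order terms $a_1g_x+b_1\bar g_x+a_0g+b_0\bar g$ into its left-hand side. Write $P g := g_t+i\alpha g_{xx}+\beta g_{xxx}$, so that $Pg = (g_t+i\alpha g_{xx}+\beta g_{xxx}+a_1g_x+b_1\bar g_x+a_0g+b_0\bar g) - (a_1g_x+b_1\bar g_x+a_0g+b_0\bar g)$. Applying Lemma \ref{mx1} to $g$ and using the triangle inequality, the three weighted norms on the left of \eqref{mx2} are bounded by $c(\beta)$ times the weighted $L^2_{xt}$ norm of the full variable-coefficient operator applied to $g$, plus $c(\beta)$ times the weighted $L^2_{xt}$ norm of $e^{a(\frac xR+\varphi)^2}(a_1g_x+b_1\bar g_x+a_0g+b_0\bar g)$. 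The goal is then to show this last term is at most one half of the left-hand side, so it can be absorbed.

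For the absorption, I would estimate each piece of the error term using only $\|a_j\|_\infty$, $\|b_j\|_\infty$ and the fact that $g$ (hence $f = e^{a\theta}g$) is supported where $|\frac xR+\varphi(t)|\ge 1$. On that support $(\frac xR+\varphi(t))^2\ge 1$, so pointwise
\begin{equation*}
\left|e^{a\theta}a_0 g\right| \le \|a_0\|_\infty e^{a\theta}\left(\tfrac xR+\varphi(t)\right)^2|g|,
\qquad
\left|e^{a\theta}a_1 g_x\right| \le \|a_1\|_\infty e^{a\theta}\left(\tfrac xR+\varphi(t)\right)^2|g_x|,
\end{equation*}
and similarly for the $b_0,b_1$ terms; taking $L^2_{xt}$ norms gives
\begin{equation*}
\left\|e^{a\theta}\big(a_1g_x+b_1\bar g_x+a_0g+b_0\bar g\big)\right\|_{L^2_{xt}}
\le C\Big(\left\|e^{a\theta}\big(\tfrac xR+\varphi\big)^2 g\right\|_{L^2_{xt}}+\left\|e^{a\theta}\big(\tfrac xR+\varphi\big)^2 g_x\right\|_{L^2_{xt}}\Big),
\end{equation*}
with $C$ depending only on the $L^\infty$ norms of the coefficients. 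Multiplying the $g$-term by $\frac{a^{5/2}}{R^3}$ and the $g_x$-term by $\frac{a^{3/2}}{R^2}$ shows that $\frac{a^{5/2}}{R^3}$ times the error norm is controlled by $C\big(\frac{R^3}{a^{5/2}}\cdot\frac{a^{5/2}}{R^3}\cdot(\ldots)\big)$-type bookkeeping; the cleanest route is to multiply the whole error estimate by $c(\beta)$ and note that after inserting the prefactors one gains a factor like $C R / a^{1/2}$ relative to the needed left-hand quantities (since $\frac{a^{5/2}}{R^3} = \frac{a^{1/2}}{R}\cdot\frac{a^2}{R^2}$ and $a^2\gtrsim R^3$ makes $\frac{a^2}{R^2}$ large, one must instead see that the \emph{missing} power is what helps). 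Concretely: $c(\beta)C\,\frac{a^{5/2}}{R^3}\|e^{a\theta}(\tfrac xR+\varphi)^2 g\|$ must be compared against $\frac{a^{5/2}}{R^3}\|e^{a\theta}(\tfrac xR+\varphi)^2 g\|$ itself — which does not close — so the correct bound uses that the error actually produces $(\frac xR+\varphi)^2$ but we only multiply by the \emph{same} prefactor, hence one needs a genuine gain, supplied by choosing $R\ge R_0$ large (depending on $\|a_j\|_\infty$) so that $c(\beta)C < \tfrac12$ after the weaker terms are dropped, or equivalently by peeling off the $f_{xx}$-term gain in Lemma \ref{mx1} to dominate. Then absorption yields \eqref{lemmx2}.

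The main obstacle is exactly this absorption bookkeeping: the error term $a_1g_x+b_1\bar g_x$ involves $g_x$, and in Lemma \ref{mx1} the quantity controlling $g_x$ comes with prefactor $\frac{a^{3/2}}{R^2}$ while also the $f_{xx}$ term appears with $\frac{a^{1/2}}{R}$; to absorb $\|e^{a\theta}a_1 g_x\|$ into $\frac{a^{3/2}}{R^2}\|e^{a\theta}(\tfrac xR+\varphi)^2 g_x\|$ one needs $\|a_1\|_\infty \lesssim \frac{a^{3/2}}{R^2}\cdot\frac{1}{(\text{available constant})}$, i.e. $a^{3/2}\gtrsim \|a_1\|_\infty R^2$; since the hypothesis only gives $a\ge MR^{3/2}$, one has $a^{3/2}\ge M^{3/2}R^{9/4}\gg R^2$ for $R$ large, so choosing $R_0 = R_0(\|\varphi'\|_\infty,\|\varphi''\|_\infty,\|a_0\|_\infty,\|a_1\|_\infty)$ sufficiently large and recalling $a^2\ge MR^3$ (which follows from $a\ge MR^{3/2}$, adjusting $M$) indeed gives the needed smallness. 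After absorbing, the $f_{xx}$-norm term on the right of \eqref{mx2} is simply discarded (it is nonnegative on the left but we only keep the two stated terms), and \eqref{lemmx2} follows with $c = 2c(\beta)$. Finally I would reduce to $\beta=1$ by the same rescaling $\alpha\mapsto\alpha/\beta^{2/3}$, $x\mapsto\beta^{1/3}x$ used at the end of Lemma \ref{mx1}, checking that $\|a_j\|_\infty$ are unaffected and the support condition is preserved.
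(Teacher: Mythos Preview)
Your approach is correct and is precisely the standard perturbation argument the paper invokes (the paper gives no proof of this lemma, merely citing \cite{E-KPV1}). Your middle paragraph is muddled, though: the error term $c(\beta)\|e^{a\theta}(a_1g_x+b_1\bar g_x+a_0g+b_0\bar g)\|_{L^2_{xt}}$ appears on the right-hand side \emph{without} any prefactor, and the absorption works exactly because the left-hand side \emph{does} carry the large prefactors $a^{5/2}/R^3$ and $a^{3/2}/R^2$; your final paragraph gets this right (e.g.\ $c(\beta)\|a_1\|_\infty \le \tfrac12\, a^{3/2}/R^2$ once $R\ge R_0$, since $a^{3/2}/R^2 \ge M^{3/2}R^{1/4}\to\infty$), so simply delete the confused ``which does not close'' passage. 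The closing rescaling to $\beta=1$ is unnecessary, as Lemma~\ref{mx1} already treats general $\beta\neq 0$.
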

\begin{theorem}\label{teomx1}
Let $u \in C([0,1]; H^3(\R))$ be a solution of
\begin{equation}\label{mx15}
u_t+i\alpha u_{xx}+\beta u_{xxx}+ a_2(x,t)u_{xx}+ a_1(x,t)u_{x}+b_1(x,t)\bar{u}_{x}+a_0(x,t)u+b_0(x,t)\bar{u}=0,
\end{equation}
with $b_0, b_1, a_0, a_1, a_2, (a_2)_x, (a_2)_{xx} \in L^{\infty}(\R^2)$ and $a_2, (a_2)_t \in L_t^{\infty}(\R: L_x^1(\R))$. If
\begin{equation}\label{mx16}
\int_{\R}\!\int_0^1(|u|^2+|u_x|^2+|u_{xx}|^2)(x,t)dxdt \le A^2,
\end{equation}
and
\begin{equation}\label{mx17}
\int_{3/8}^{5/8}\!\int_0^1u^2(x,t)dxdt \ge 1,
\end{equation}
then there exist constants $R_0,c_0,c_1>0$ depending on
$$
A, \|b_0\|_{\infty}, \|b_1\|_{\infty}, \|a_0\|_{\infty}, \|a_1\|_{\infty}, \|a_2\|_{\infty}, \|\partial_x a_2\|_{\infty}, \|\partial_x^2 a_2\|_{\infty}, \|a_2\|_{L_t^\infty L_x^1}, \|\partial_t a_2\|_{L_t^\infty L_x^1}
$$
such that for $R \ge R_0$
\begin{equation}\label{1mx17}
\delta(R)=\delta_u(R)=\left(\int_0^1\!\int_{R-1}^R(|u|^2+|u_x|^2+|u_{xx}|^2)(x,t)dxdt \right)^{1/2}\ge c_0e^{-c_1R^{3/2}}.
\end{equation}
\end{theorem}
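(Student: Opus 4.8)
The plan is to follow the Carleman-estimate strategy of \cite{E-KPV1}, combining the lower estimate of Lemma \ref{lemmx1} with a suitable truncation of $u$, and then playing off the lower bound \eqref{mx17} against the decay \eqref{mx16}. The upshot is that if $\delta(R)$ were smaller than $c_0 e^{-c_1 R^{3/2}}$ for a sequence of large $R$, then the Carleman inequality would force the bulk of $u$ on the region $\{3/8<x<5/8\}$ to be exponentially small, contradicting \eqref{mx17}.

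First I would fix a cutoff in time. Since $u$ need not be compactly supported in $x$, I would introduce, for $R$ large, a spatial cutoff $\chi_R$ and a temporal cutoff $\varphi$ so that the weight $(\frac{x}{R}+\varphi(t))^2$ is $\ge 1$ precisely where the truncation error lives. Concretely: choose $\varphi\in C_0^\infty((0,1))$ real with $\varphi\equiv \tfrac34$ on, say, $[1/4,3/4]$ (so that on the region $\{3/8<x<5/8\}\times[1/4,3/4]$ one has $\frac xR+\varphi(t)\approx \tfrac34$, hence $|\frac xR+\varphi|<1$ — this puts the ``good'' region \emph{outside} the support condition of Lemma \ref{lemmx1}), and $\varphi$ decreasing to $0$ near the endpoints. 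Then pick a spatial cutoff $\theta$ which is $1$ for $\frac xR+\varphi(t)\ge 1$ near the inner boundary and $0$ for $\frac xR+\varphi(t)\le$ some smaller constant, \emph{plus} an outer cutoff at $x\sim 3R$ so that the resulting function $g=\theta(\tfrac xR+\varphi(t))u$ lies in $C_0^\infty$ (after the standard approximation of the $H^3$ solution by smooth compactly supported ones). The function $g$ satisfies the equation
\begin{equation*}
g_t+i\alpha g_{xx}+\beta g_{xxx}+a_2 g_{xx}+a_1 g_x+b_1\bar g_x+a_0 g+b_0\bar g = \mathcal E_R,
\end{equation*}
where $\mathcal E_R$ is supported in the two ``collar'' regions where $\theta$ transitions: an inner collar near $\frac xR+\varphi(t)\sim 1$ (contributing, after multiplying by the weight, a factor controlled by $e^{ca}$ times $\delta$-type quantities localized there, since $\frac xR+\varphi\approx 1$) and an outer collar near $x\sim 3R$ (where the weight is of size $e^{ca R^2}$, but one absorbs this using the a priori exponential decay of $u$ coming from Theorem \ref{theo-1} applied with $l=3/2$, or more simply one notes $R$ is eventually taken after $a$, so this term is negligible — I would actually run the argument only for $R$ already large enough that Theorem \ref{theo-1} gives $\|u\|+\|u_x\|+\|u_{xx}\|\lesssim e^{-aR^{3/2}/4^{3/2}}$ on $\{3R<x<3R+1\}$, hence the outer-collar term is dominated). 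The term $a_2 g_{xx}$ must be handled as in Lemma \ref{lemmx1}, which is why that lemma already includes $a_1,b_1,a_0,b_0$; the $a_2$-term with its derivatives and the $L_t^\infty L_x^1$ hypotheses is exactly what lets one commute it past the weight with an acceptable error, as in \cite{E-KPV1}.

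Next I would apply Lemma \ref{lemmx1} to $g$ with parameters $R\ge R_0+\alpha^2$ and $a\ge MR^{3/2}$. On the left side, restrict the integral to $\{3/8<x<5/8\}\times[1/4,3/4]$, where $g=u$ and $\frac xR+\varphi\approx\tfrac34$ — wait, that region violates the support hypothesis, so instead I restrict the \emph{estimate} to be useful on the region $\{1\le \frac xR+\varphi(t)\le 1+\eta\}$ adjacent to $3/8<x<5/8$; more precisely one chooses $\varphi$ so that $\{x/R+\varphi(t)\ge1\}$ contains a fixed strip in $x$ just to the left of $3/8$, on which $u$ is not yet cut off, and so that the \emph{lower} bound region $\{3/8<x<5/8\}$ with $u^2$ integral $\ge1$ is connected to it. Then the left side of \eqref{lemmx2} is bounded below by $c\,a^{1/2} R^{-1} e^{ca}\cdot(\text{local }L^2\text{ norm of }u\text{ near }x\approx 3R/8)$, while \eqref{mx17} together with an interpolation/Poincaré-type propagation along the time direction forces this local norm to be bounded below independently of $R,a$ (this is the step in \cite{E-KPV1} where one shows the lower bound at the center propagates outward a little). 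On the right side, the weighted $L^2$ norm of $\mathcal E_R$ splits into the inner-collar term $\le C e^{ca}\delta(R')$ for $R'$ comparable to $R$ (by construction the collar sits near $\frac xR+\varphi\approx1$ where the weight is $e^{O(a)}$) and the negligible outer term. Comparing, one gets
\begin{equation*}
c\, a^{1/2} R^{-1} e^{c_2 a}\le C e^{c_3 a}\,\delta(R') + (\text{negligible}),
\end{equation*}
and, choosing $a = M R^{3/2}$, rearranging yields $\delta(R')\ge c_0 e^{-c_1 R^{3/2}}$ for $R'$ in a range comparable to $R$; since $R$ ranges over all large values this gives \eqref{1mx17} after renaming.

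The main obstacle I anticipate is bookkeeping the geometry of the cutoffs so that three things hold simultaneously: (i) the ``mass'' region $\{3/8<x<5/8\}$ from \eqref{mx17} lies where $g$ coincides with $u$ and is linked to the support region $\{|\frac xR+\varphi|\ge1\}$ of Lemma \ref{lemmx1}; (ii) the inner transition collar of $\theta$ sits at a place where the weight $e^{a(\frac xR+\varphi)^2}$ is only $e^{O(a)}$, so that the error is $\lesssim e^{O(a)}\delta$ and not worse; and (iii) the outer transition at $x\sim 3R$ is controlled — here one must invoke the a priori exponential decay from Theorem \ref{theo-1} (valid with $l=3/2$ under the hypotheses, which is why the theorem is stated as it is) to kill the $e^{aR^2}$ weight. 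Getting the exponents to match — in particular that the exponent produced on the left, $\sim c_2 a$ with $c_2$ coming from the value of $(\frac xR+\varphi)^2\approx(\text{const})$ on the mass region, strictly exceeds the exponent $c_3 a$ from the collar where $(\frac xR+\varphi)^2\approx1$ — requires choosing $\varphi$ with $\varphi\equiv 3/4$ there so that $(\text{const})^2=9/16>?$... actually one needs the reverse, so instead one uses $\varphi$ making the mass region land at $\frac xR+\varphi$ \emph{larger} than $1$; this is precisely the delicate normalization carried out in \cite{E-KPV1} and the Schrödinger term $i\alpha u_{xx}$ only affects it through the harmless lower-order commutators already absorbed in Lemma \ref{lemmx1}. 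Once the geometry is pinned down, the rest is the routine comparison above.
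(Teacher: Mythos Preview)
Your outline has the right flavor, but there are two genuine gaps that would prevent the argument from closing.

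First, you never eliminate the second-order variable coefficient $a_2$. Lemma \ref{lemmx1} only allows lower-order coefficients $a_0,a_1,b_0,b_1$; it contains no $a_2\partial_x^2$ term, and you cannot simply ``commute it past the weight'' as you suggest. In the paper one performs the gauge transformation
\[
v(x,t)=u(x,t)\,e^{\frac1{3\beta}\int_0^x a_2(s,t)\,ds},
\]
which removes the $a_2v_{xx}$ term from the equation for $v$ and produces only new zeroth- and first-order coefficients. The hypotheses $a_2,(a_2)_t\in L_t^\infty L_x^1$ are there precisely so that $\int_0^x a_2$ and $\int_0^x\partial_t a_2$ are bounded, making the gauge a bounded multiplier and yielding $\delta_u(R)\sim\delta_v(R)$. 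Without this step Lemma \ref{lemmx1} is not applicable.

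Second, your handling of the outer cutoff is circular. You place it at $x\sim 3R$ and then invoke Theorem \ref{theo-1} to kill the resulting error. But Theorem \ref{theo-1} requires the decay hypothesis $u(\cdot,0),u(\cdot,1)\in H^1(e^{ax_+^{3/2}}dx)$, which is \emph{not} among the assumptions of Theorem \ref{teomx1}; moreover Theorem \ref{teomx1} is the lower-bound ingredient that is later combined with Theorem \ref{theo-1} to prove Theorem \ref{ucp5}, so using the latter inside the former would invert the logical order. The correct geometry avoids this entirely: one takes $\varphi\equiv 3$ on $[3/8,5/8]$ and $\varphi\equiv 0$ near $t=0,1$, and sets $g=\theta_R(x)\,\mu(\tfrac xR+\varphi(t))\,v$ with the outer cutoff $\theta_R$ at $x\in[R-1,R]$, not at $3R$. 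Then $g$ is supported in $(-2R,R)\times(0,1)$, the $\theta_R$-collar sits where the weight is at most $e^{16a}$ and is controlled by $\delta_v(R)$ itself, the $\mu$-collar sits where the weight is at most $e^{4a}$ and is controlled by $A$, while on the mass region $\{0<x<1\}\times\{3/8<t<5/8\}$ one has $\tfrac xR+\varphi\approx 3$ and hence weight $\approx e^{9a}$. Comparing $e^{9a}\le c(e^{16a}\delta_v(R)+e^{4a}A)$ with $a=MR^{3/2}$ gives $\delta_v(R)\ge c_0 e^{-c_1R^{3/2}}$ directly. Your repeated attempts with $\varphi\equiv 3/4$ were pushing the mass region to the wrong side of the support condition; the point is that $\varphi$ must be large in the middle so that the weight on the mass region strictly dominates the weight on the $\mu$-collar.
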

\begin{proof}
Considering the gauge transformation
\begin{equation}\label{2mx17}
v(x,t)=u(x,t)e^{1/(3\beta)\int_0^x a_2(s,t)ds},
\end{equation}
the equation for $v=v(x,t)$ can be written as
\begin{align}\label{mx13}
v_t &- \left(\frac{1}{3\beta}\int_0^x \partial_t a_2(s,t)ds\right) v+i\alpha\left(v_{xx}-\frac{2}{3\beta}a_2v_x+\left(-\frac{1}{3\beta}\partial_x a_2+\frac{1}{9\beta^2}a_2^2\right)v \right)\nonumber\\
&+\beta \left( v_{xxx}-\frac{a_2}{\beta }v_{xx}+\left(-\frac{1}{3\beta}\partial_x a_2+\frac{1}{9\beta^2}a_2^2\right) v_x+\left(-\frac{a_2^3}{27\beta^3}+\frac{a_2}{3\beta^2}\partial_x a_2-\frac{1}{3\beta}\partial_x^2 a_2\right)v\right)\nonumber\\
&+a_2 v_{xx}-\frac{2a_2^2}{3\beta}v_x+\left(-\frac{a_2 \partial_x a_2}{3\beta}+\frac{a_2^3}{9\beta^2}\right)v+a_1v_x-\frac{a_2 a_1}{3\beta}v +a_0v-\frac{\bar{a}_2 b_1}{3\beta}\bar{v}_x+\left(b_0-\frac{\bar{a}_2 b_1}{3\beta}\right)\bar{v}\nonumber\\
= & \,v_t+ i \alpha v_{xx}+\beta v_{xxx}+ \tilde{a_1}(x,t)v_x+\tilde{a_0}(x,t)v+ \tilde{b_1}(x,t)\bar{v}_x+\tilde{b_0}(x,t)\bar{v}=0.
\end{align}
where $\tilde{a_0}, \tilde{a_1}, \tilde{b_0}, \tilde{b_1}$ are complex functions in $L^{\infty}(\R^2)$.

As in \cite{E-KPV1}, we define the functions
$\theta_R(x)=1$ if $x<R-1$, $\theta_R(x)=0$ if $x>R$,  $\mu(x)=1$ if $x>2$, $\mu(x)=0$ if $x<1$ and $\varphi(t)=3$ if $t \in [3/8, 5/8]$, $\varphi(t)=0$ if $t \in [0, 1/4]\cup [3/4,1]$,  $0 \le \theta_R, \mu \le 1$, $\theta_R, \mu \in C^{\infty}(\R)$, $0\le \varphi \le 3$ and $\varphi \in C_0^{\infty}(\R)$ and the function
$$
g(x,t)=\theta_R(x) \mu \left(\frac{x}{R}+\varphi(t)\right)v(x,t), \quad (x,t) \in \R \times [0,1],
$$
so that $g$ has support on $(-2R,R)\times(0,1)$ and can be assumed to satisfy the hypothesis of Lemma \ref{mx1}.

Using \eqref{mx15} one has that
\begin{equation}\label{mx18}
\begin{split}
&g_t+i \alpha g_{xx}+ \beta g_{xxx}+\tilde{a}_1g_x+\tilde{b}_1 \bar{g}_x+\tilde{a}_0g+\tilde{b}_0\bar{g} \\
& =\mu\left(\frac{x}{R}+\varphi(t)\right)\left(i \alpha \theta_R^{(2)}v+2i \alpha \theta_R^{(1)}v_x+\beta \theta_R^{(3)}v+3\theta_R^{(2)}v_x+3 \beta \theta_R^{(1)}v_{xx}+\tilde{a}_1 \theta_R^{(1)}v+\tilde{b}_1 \theta_R^{(1)}\bar{v}\right)\\
&\quad+\theta_R(x)\left(\mu^{(1)}\left(\varphi^{(1)}+\frac{\tilde{a}_1}{R}\right)v +i \alpha \frac{\mu^{(2)}}{R^2}v+2i \alpha \frac{\mu^{(1)}}{R}v_x+\beta\frac{\mu^{(3)}}{R^3}v+3\beta\frac{\mu^{(2)}}{R^2}v_x+3\beta\frac{\mu^{(1)}}{R}v_{xx}\right)\\
&\quad+ 2i \alpha \theta_R^{(1)}\frac{\mu^{(1)}}{R}v+3\beta \theta_R^{(2)}\frac{\mu^{(1)}}{R}v+3\beta \theta_R^{(1)}\frac{\mu^{(2)}}{R^2}v+6\beta \theta_R^{(1)}\frac{\mu^{(1)}}{R}v_x+\tilde{b}_1\theta_R\frac{\mu^{(1)}}{R}\bar{v}.
\end{split}
\end{equation}

The remaining part of the proof follows as in \cite{E-KPV1}. In fact, using the definitions of $\theta_R, \mu$, $\varphi$ and \eqref{mx17} we get
\begin{equation}\label{mx19}
\dfrac{a^{5/2}}{R^3}\left\|e^{a(\frac{x}{R}+\varphi(t))^2 }\left(\frac{x}{R}+\varphi(t)\right)^2 g\right\|_{L^2_{xt}}\ge c\dfrac{a^{5/2}}{R^3} e^{9a}.
\end{equation}

On the other hand, we observe that the first term in the right-hand side of \eqref{mx18} is supported in $[R-1,R]\times [0,1]$ where $e^{a(x/R +\varphi(t))^2}\le e^{16a}$, and in the remaining terms in the right-hand side of \eqref{mx18} we have $e^{a(x/R +\varphi(t))^2}\le e^{4a}$. Thus \eqref{lemmx2} and \eqref{mx16} imply that for $a \ge M_1R^{3/2}$ ($M_1$ as in Lemma \ref{lemmx1}),
\begin{align}\label{mx20}
\dfrac{a^{5/2}}{R^3}&\left\|e^{a(\frac{x}{R}+\varphi(t))^2 }\left(\frac{x}{R}+\varphi(t)\right)^2 g\right\|_{L^2_{xt}}\nonumber\\
\le & c \left\|e^{a (\frac{x}{R}+\varphi(t))^2 }(g_t+i\alpha g_{xx}+\beta g_{xxx}+a_1(x,t)g_x+b_1(x,t)\bar{g}_x+a_0(x,t)g+b_0(x,t)\bar{g})\right\|_{L^2_{xt}}\nonumber\\
\le &  c_1 e^{16a} \delta_v(R)+c_1 e^{4a}A.
\end{align}

Combining \eqref{mx19} and \eqref{mx20} it follows that
$$
c\dfrac{a^{5/2}}{R^3}e^{9a} \le c_1 e^{16a} \delta_v(R)+c_1 e^{4a}A, \quad \forall a \ge M_1R^{3/2}.
$$
In particular, for $a=M_1R^{3/2}$ with $R$ sufficiently large we obtain
$$
\delta_v(R)\ge c_0e^{-c_1R^{3/2}}.
$$
By the hypothesis on the coefficients $a_0, a_1, a_2$ and the definitions \eqref{1mx17}, \eqref{2mx17} we conclude that
$$
\delta_u(R) \sim \delta_v(R)\ge c_0e^{-c_1R^{3/2}}.
$$
\end{proof}

\secao{Proof of the Main Results}\label{main-results}

This section is devoted to provide proofs of the main results of this work. First, let us begin with the proof of the Theorem \ref{ucp5}.\\

\noindent
{\bf Proof of Theorem \ref{ucp5}.}
If $u \neq 0$, we can suppose that $u$ satisfies the hypothesis of Theorem \ref{teomx1} and therefore
\begin{equation}\label{mx21}
\delta_u(R) \ge c_0e^{-c_1R^{3/2}},
\end{equation}
and apply Theorem \ref{theo-1} with $l=3/2$, $a \gg 8c_1$, $c_1$ as above we have
$$
\delta_u(R) \le c e^{-aR^{3/2}/8},
$$
which is a contradiction with \eqref{mx21} for $R$ sufficiently large.\hfill$\Box$

Now we are position to supply proof if the first main result of this work.\\

\noindent
{\bf Proof of Theorem \ref{ucp3}.}
Let $u_1$, $u_2$ be strong solutions of the equation \eqref{0y0}, then their difference $w=u_1-u_2$ satisfies the following equation
\begin{equation}\label{mx100}
\partial_t w+i \alpha \partial_x^2w +\beta\partial_x^3 w+\delta a_1(x,t)\partial_x w+\epsilon b_1(x,t)\partial_x \bar{w}+a_0(x,t)w +b_0(x,t)\bar{w}=0,
\end{equation}
where $a_1(x,t)=|u_1|^2$, $b_1(x,t)=u_1^2$, $a_0(x,t)=i \gamma(|u_1|^2+|u_2|^2)+\delta \bar{u}_2 \partial_x u_2+\epsilon (u_1+u_2)
\partial_x\bar{u}_2$ and $b_0(x,t)=i \gamma u_1 u_2+\delta u_1 \partial_x u_2$.

To conclude the proof of the theorem, it is   sufficient to prove that $w, a_0, a_1, b_0, b_1$ satisfy 
the hypotheses of Theorem \ref{ucp5}. As in \cite{E-KPV1}, this is a consequence of
the estimates
\begin{equation}\label{mx24}
\|vu\|_{L_x^pL_t^p} \le
\|v\|_{L_x^{\infty}L_t^{\infty}}\|u\|_{L_x^pL_t^p}.
\end{equation}

For the sake of completeness, we present the proofs of the estimates correcting some mistakes present in \cite{E-KPV1}. 

We need show that
$$
a_0,b_0 \in L_{xt}^{4/3}\cap L_x^{16/13}L_t^{16/9}\cap
L_x^{8/7}L_t^{8/3}, \quad a_1,b_1  \in 
L_x^{16/13}L_t^{16/9}\cap L_x^{8/7}L_t^{8/3}\cap
L_x^{16/15}L_t^{16/3}.
$$

We will prove the estimates only for $b_0$ and $b_1$, because those for $a_0$ and $a_1$ are similar. Using  the hypothesis
\begin{equation}\label{mx23}  \quad u_j
\in C([0,1]:H^3\cap L^2(|x|^2 dx)), \quad j=1,2,
\end{equation}
we have (see \cite{E-KPV1})
\begin{align}
|x|u_j, |x|^{2/3}(u_j)_x, |x|^{1/3}(u_j)_{xx}, (u_j)_{xxx} \in
L^{\infty} ([0,1], L_x^2), \quad j=1,2\label{mx27} \\ u_j,
|x|^{2/3} u_j \in L^{\infty} ([0,1], L_x^{\infty}), \quad
j=1,2.\label{mx25}
\end{align}
Thus, \eqref{mx24}, \eqref{mx25} and Holder´s inequality yield
\begin{equation*}
\|u_1 u_2\|_{L_{xt}^{4/3}} \le c \|u_1\|_{L_{xt}^{\infty}}
\sup_{t\in[0,1]}\|\langle x \rangle^{1/2^+}u_2\|_{L_x^2}\le c
\|u_1\|_{L_{xt}^{\infty}} \sup_{t\in[0,1]}(\|u_2\|_{L_x^2}+\| x
u_2\|_{L_x^2}).
\end{equation*}

Similarly
\begin{equation*}
\|u_1 (u_2)_x\|_{L_{xt}^{4/3}} \le c \|u_1\|_{L_{xt}^{\infty}}
\sup_{t\in[0,1]}\|\langle x \rangle^{1/2^+}(u_2)_x\|_{L_x^2}\le c
\|u_1\|_{L_{xt}^{\infty}} \sup_{t\in[0,1]}(\|u_2\|_{L_x^2}+\|
|x|^{2/3} (u_2)_x\|_{L_x^2}).
\end{equation*}

Now we will prove that $b_0 \in L_x^{16/13}L_t^{16/9}$. We have,
\begin{equation*}
\begin{split}
\|u_1 u_2\|_{L_x^{16/13}L_t^{16/9}} \le & c
\|u_1\|_{L_{xt}^{\infty}} \left(\int\!\!\frac{\langle x \rangle^p}{\langle x \rangle^p}\int \left( |u_2|^{16/9}dt\right)^{9/13} dx\right)^{13/16}, \quad 4/13 <p <15/13,\\
\le & c
\|u_1\|_{L_{xt}^{\infty}} \left(\int\!\!\int\langle x
\rangle^{13p/9} |u_2|^{16/9}dxdt\right)^{9/16}, \\
\le&  c \|u_1\|_{L_{xt}^{\infty}} \left(\int \|\langle x
\rangle u_2\|_{L_x^2}^{16/9}dt\right)^{9/16}\\
\le & c \|u_1\|_{L_{xt}^{\infty}}
\sup_{t\in[0,1]}(\|u_2\|_{L_x^2}+\| x u_2\|_{L_x^2}).
\end{split}
\end{equation*}

Analogously, using $4/13<p<35/39$, we get
$$ \|u_1 (u_2)_x\|_{L_x^{16/13}L_t^{16/9}} \le c
\|u_1\|_{L_{xt}^{\infty}} \sup_{t\in[0,1]}(\|u_2\|_{L_x^2}+\|
|x|^{2/3} (u_2)_x\|_{L_x^2}).$$ 

Now we will prove that $b_0 \in L_x^{8/7}L_t^{8/3}$. Similarly as in \cite{E-KPV1}, we get
\begin{align*}
\|u_1 u_2\|_{L_x^{8/7}L_t^{8/3}} \le & c
\|u_1\|_{L_{xt}^{\infty}} \left(\int\!\!\int\langle x
\rangle^{4^+/3} |u_2|^{8/3}dxdt\right)^{3/8}\\
\le & c \|u_1\|_{L_{xt}^{\infty}}\, \|\langle
x\rangle^{3/2\epsilon}
u_2\|_{L_{xt}^{\infty}}^{1/4}\sup_{t\in[0,1]}\|\langle
x\rangle^{2/3} u_2\|_{L_{xt}^{2}}^{3/4},\quad 0<\epsilon \ll 1,
\end{align*}
and we have similar estimate  for
$\|u_1(u_2)_x\|_{L_x^{8/7}L_t^{8/3}}$.

Finally, it is sufficient to prove that $b_1=u_1^2 \in
L_x^{16/15}L_t^{16/3}$. In fact (see \cite{E-KPV1})
$$
\|u_1^2\|_{L_x^{16/15}L_t^{16/3}}\le c
\|u_1\|_{L_{xt}^{\infty}}\,\|\langle x\rangle
u_1\|_{L_{xt}^{2}}^{3/8} \|\langle x\rangle^{2/3}
u_1\|_{L_{xt}^{\infty}}^{5/8}.
$$
Using \eqref{mx23}-\eqref{mx25} we conclude the proof of the
theorem.\hfill$\Box$

In what follows, we provide the proof of the second main result about the decay property of the solution to the Hirota equation.\\

\noindent
{\bf Proof of Theorem \ref{teoairy}.} The proof of this theorem is very similar to the proof of Theorem 1.4 in \cite{E-KPV1}. For the sake of clarity, we provide a brief idea pointing out the differences that arise in our case.

Let  $\psi \in C_0^{\infty}(\mathbb{R})$, $\psi \ge 0$, $\textrm{supp} \,\psi \subset(-\delta, \delta)$, $\delta \in (0, 1/8)$ and $\int \psi(x)dx=1$.

We consider the IVP
\begin{equation}\label{lnpt12}
\begin{cases}
u_t + i\alpha u_{xx} + \beta u_{xxx}+F(u)= 0, \quad (x,t) \in \mathbb{R}\times [0, \Delta T]\\
u(x, 0) = u_0(x)=\varepsilon V(1)\phi= \varepsilon S_1 *\psi,
\end{cases}
\end{equation}
where $F(u)=i\gamma|u|^{2}u+\delta
|u|^{2}\partial_{x}u+\epsilon u^{2}\partial_{x}\overline{u}$,\,\, and $\varepsilon, \Delta T$ are sufficiently small.

Without loss of generality we can suppose $\beta=1$ (Remark \ref{obspat}). 

Let $V(t) u_0$ be the solution of IVP \eqref{lnpt12} when $\beta=1$ and let $V_{\alpha }(t) u_0$ be the solution of IVP \eqref{lnpt12} when $\beta=0$.

Let us  consider $\phi=V_{-\alpha} \psi$, thus $\psi= V_{\alpha} \phi$,
then
$$
V(t)\phi=S_t *V_{\alpha}\phi= S_t *\psi,
$$
where $S_t *f(x)= \frac1{\sqrt[3]{3t}}Ai(\frac{\cdot}{\sqrt[3]{3t}})*f(x)$ and $Ai(x)$ is the Airy function.

 The solution to the IVP \eqref{lnpt12} (see \cite{E-KPV1} and \cite{KPV1}) is obtained by iterating
$$
\Phi(u^n)(t)=u^{n+1}(t)=\varepsilon S_{1+t}*\psi+\int_0^t S_{t-t'}*F(u^n)(t')dt',
$$
$n=1,2,\cdots$ in the ball
\begin{equation}\label{norma0}
|\!|\!| w |\!|\!|_{T,s,k}\le 2\varepsilon(\|S_1*\psi\|_{H^s}+\|x^k S_1*\psi\|_{L^2}),
\end{equation}
where
\begin{align}\label{norma1}
|\!|\!| w |\!|\!|_{T,s,k}=\sup_{[0,T]}(\|w(t)\|_{H^s}+\|x^k w(t)\|_{L^2})+\|w(t)\|_{L_x^2L_t^{\infty}([0,T])}+\|\partial_x^{s+1}w(t)\|_{L_x^{\infty}L_t^2([0,T])}.
\end{align} 
The sequence $\{u^n\}$ converges in the norm given by \eqref{norma1}, for $T>0$ sufficiently small, inside the ball defined in \eqref{norma0}.

Using the induction principle, the integral equation and properties of $S_{t}*\psi$ (Airy function), for $t\in [1, 1+\Delta T]$, $\Delta T>0$ small enough (see \cite{E-KPV1}), we obtain
\begin{equation}\label{lnpt5}
|F^n(x,t)| \le c\varepsilon^3\,
\begin{cases}
e^{-x^{3/2}},\quad \textrm{if}\quad x>1/2,\\
1,\quad \textrm{if}\quad |x| \le 1/2,\\
1/(1+x^2)^{2k},\quad \textrm{if}\quad x \in \mathbb{R}.
\end{cases}
\end{equation}
This inequality, properties of Airy function, a limit process and the same argument as in \cite{E-KPV1} for $\varepsilon$ sufficiently small, yield the desired result.\hfill$\Box$


\end{document}